\documentclass[final,3p]{elsarticle}

\usepackage{amssymb}
\usepackage{amsmath}
\usepackage{amsthm}
\usepackage{url}

\usepackage{algorithm}
\usepackage{algorithmic}
\usepackage{complexity}

\newtheorem{theorem}{Theorem}
\newtheorem{definition}[theorem]{Definition}
\newtheorem{lemma}[theorem]{Lemma}

\newtheorem{conjecture}[theorem]{Conjecture}
\newtheorem{problem}[theorem]{Problem}

\journal{Linear Algebra and its Applications}

\begin{document}

\begin{frontmatter}
\title{Phase retrieval from very few measurements}

\author[afit]{Matthew Fickus}
\author[afit]{Dustin G.~Mixon}
\ead{dustin.mixon@afit.edu}
\author[afit]{Aaron A.~Nelson}
\author[msu]{Yang Wang}

\address[afit]{Department of Mathematics and Statistics, Air Force Institute of Technology, Wright-Patterson AFB, OH 45433, USA}
\address[msu]{Department of Mathematics, Michigan State University, East Lansing, MI 48824, USA}

\begin{abstract}
In many applications, signals are measured according to a linear process, but the phases of these measurements are often unreliable or not available.
To reconstruct the signal, one must perform a process known as phase retrieval.
This paper focuses on completely determining signals with as few intensity measurements as possible, and on efficient phase retrieval algorithms from such measurements.
For the case of complex $M$-dimensional signals, we construct a measurement ensemble of size $4M-4$ which yields injective intensity measurements; this is conjectured to be the smallest such ensemble.
For the case of real signals, we devise a theory of ``almost'' injective intensity measurements, and we characterize such ensembles.
Later, we show that phase retrieval from $M+1$ almost injective intensity measurements is $\NP$-hard, indicating that computationally efficient phase retrieval must come at the price of measurement redundancy.
\end{abstract}

\begin{keyword}
phase retrieval \sep informationally complete \sep unit norm tight frames \sep computational complexity
\end{keyword}

\end{frontmatter}

\section{Introduction}

Given an ensemble $\Phi=\{\varphi_n\}_{n=1}^N$ of $M$-dimensional vectors (real or complex), the \textit{phase retrieval problem} is to recover a signal $x$ from \textit{intensity measurements} $\mathcal{A}(x):=\{|\langle x,\varphi_n\rangle|^2\}_{n=1}^N$. 
Note that for any scalar $\omega$ of unit modulus, $\mathcal{A}(\omega x)=\mathcal{A}(x)$, and so the best one can hope to do is recover $x$ up to a global phase factor $\{\omega x:|\omega|=1\}$. 
Intensity measurements arise in a number of applications in which phase is either unreliable or not available~\cite{BunkEtal:07,DaintyF:87,Harrison:93,MiaoISE:08,Millane:90,Walther:63}, and in most of these applications, it is desirable to perform phase retrieval from as few measurements as possible; indeed, increasing $N$ invariably makes the measurement process more expensive or time consuming. 

Recently, there has been a lot of work on algorithmic phase retrieval.
For example, phase retrieval can be formulated as a low-rank (actually, rank-1) matrix recovery problem~\cite{CandesESV:13,CandesL:12,CandesSV:13,ChaiMP:11,DemanetH:12,Voroninski:12a}, and with this formulation, phase retrieval is possible from $N=O(M)$ intensity measurements~\cite{CandesL:12}.
Another approach is to exploit the polarization identity along with expander graphs to design a measurement ensemble and apply spectral methods to perform phase retrieval~\cite{AlexeevBFM:12,BandeiraCM:13}.
One can also formulate phase retrieval in terms of MaxCut, and solvers for this formulation are equivalent to a popular solver (PhaseLift) for the matrix recovery formulation~\cite{Voroninski:12b,WaldspurgerAM:12}.
While this recent work has focused on stable and efficient phase retrieval from asymptotically few measurements (namely, $N=O(M)$), the present paper focuses on injectivity and algorithmic efficiency with the absolute minimum number of measurements.

In the next section, we construct an ensemble of $N=4M-4$ measurement vectors in $\mathbb{C}^M$ which yield injective intensity measurements. This is the second known injective ensemble of this size (the first is due to Bodmann and Hammen~\cite{BodmannH:13}), and it is conjectured to be the smallest-possible injective ensemble~\cite{BandeiraCMN:13}. 
The same conjecture suggests that $4M-4$ generic measurement vectors yield injectivity (that is, there exists a measure-zero set of ensembles of $4M-4$ vectors such that every ensemble of $4M-4$ vectors outside of this set yields injectivity). 
The following summarizes what is currently known about the so-called ``$4M-4$ conjecture'':

\begin{itemize}
\item The conjecture holds for $M=2,3$~\cite{BandeiraCMN:13}.
\item If $N<4M-2\alpha(M-1)-3$, then $\mathcal{A}$ is not injective~\cite{HeinosaariMW:13}; here, $\alpha(M-1)\leq\log_2 M$ denotes the number of $1$'s in the binary expansion of $M-1$.
\item For each $M\geq2$, there exists an ensemble $\Phi$ of $N=4M-4$ measurement vectors such that $\mathcal{A}$ is injective~\cite{BodmannH:13} (see also Section~2 of this paper).
\item If $N\geq 4M-2$, then $\mathcal{A}$ is injective for generic $\Phi$~\cite{BalanCE:06}.
\end{itemize}

Bodmann and Hammen~\cite{BodmannH:13} leverage the Dirichlet kernel and the Cayley map to prove injectivity of their ensemble, but it is unclear whether phase retrieval is algorithmically feasible from their ensemble. 
By contrast, for the ensemble in this paper, we use basic ideas from harmonic analysis over cyclic groups to devise a corresponding phase retrieval algorithm, and we demonstrate injectivity by proving that the algorithm succeeds.

In Section~3, we devise a theory of ensembles for which the corresponding intensity measurements are ``almost'' injective, that is, $\mathcal{A}^{-1}(\mathcal{A}(x))=\{\omega x:|\omega|=1\}$ for almost every $x$. In this section, we focus on the real case, meaning phase retrieval is up to a global sign factor $\omega=\pm1$, and our approach is inspired by the characterization of injectivity in the real case by Balan, Casazza and Edidin~\cite{BalanCE:06}. 
After characterizing almost injectivity in the real case, we find a particularly satisfying sufficient condition for almost injectivity: that $\Phi$ forms a unit norm tight frame with $M$ and $N$ relatively prime. 
Characterizing almost injectivity in the complex case remains an open problem.

We conclude with Section~4, in which we consider algorithmic phase retrieval in the real case from $N=M+1$ almost injective intensity measurements. 
Specifically, we show that phase retrieval in this case is $\NP$-hard by reduction from the subset sum problem. 
The hardness of phase retrieval in this minimal case suggests a new problem for phase retrieval: What is the smallest $C$ for which there exists a family of ensembles of size $N=CM+o(M)$ such that phase retrieval can be performed in polynomial time?

\section{$4M-4$ injective intensity measurements}

In this section, we provide an ensemble of $4M-4$ measurement vectors which yield injective intensity measurements for $\mathbb{C}^M$.
The vectors in our ensemble are modulated discrete cosine functions, and they are explicitly constructed at the end of this section.
We start here by motivating our construction, specifically by identifying the significance of circular autocorrelation.

Consider the $P$-dimensional complex vector space $\ell(\mathbb{Z}_P):=\{u\colon\mathbb{Z}\rightarrow\mathbb{C}:u[p+P]=u[p],~\forall p\in\mathbb{Z}\}$.
The discrete Fourier basis in $\ell(\mathbb{Z}_P)$ is the sequence of $P$ vectors $\{f_q\}_{q\in\mathbb{Z}_P}$ defined by $f_q[p]:=e^{2\pi ipq/P}$ (the notation ``$q\in\mathbb{Z}_P$'' is taken to mean a set of coset representatives of $\mathbb{Z}$ with respect to the subgroup $P\mathbb{Z}$).
The discrete Fourier transform (DFT) on $\mathbb{Z}_P$ is the analysis operator $F^*\colon\ell(\mathbb{Z}_P)\rightarrow\ell(\mathbb{Z}_P)$ of this basis, with corresponding inverse DFT $(F^*)^{-1}=\frac{1}{P}F$, where
\begin{equation*}
(F^*u)[q]=\langle u,f_q\rangle=\sum_{p\in\mathbb{Z}_P}u[p]e^{-2\pi ipq/P},
\qquad
(Fv)[p]=\sum_{q\in\mathbb{Z}_P}v[q]f_q[p]=\sum_{q\in\mathbb{Z}_P}v[q]e^{2\pi ipq/P}.
\end{equation*}
Now let $T^p\colon\ell(\mathbb{Z}_P)\rightarrow\ell(\mathbb{Z}_P)$ be the translation operator defined by $(T^pu)[p^\prime]:=u[p^\prime-p]$.
The circular autocorrelation of $u$ is then $\operatorname{CirAut}(u)\in\ell(\mathbb{Z}_P)$, defined entrywise by
\begin{equation}
\label{eq.CirAut def}
\operatorname{CirAut}(u)[p]:=\langle u,T^pu\rangle=\sum_{p^\prime\in\mathbb{Z}_P}u[p^\prime]\overline{u[p^\prime-p]}.
\end{equation}
Consider the DFT of a circular autocorrelation:
\begin{align*}
(F^*\operatorname{CirAut}(u))[q]&=\sum_{p\in\mathbb{Z}_P}\sum_{p^\prime\in\mathbb{Z}_P}u[p^\prime]\overline{u[p^\prime-p]}e^{-2\pi ipq/P}\\
&=\sum_{p^\prime\in\mathbb{Z}_P}u[p^\prime]e^{-2\pi ip^\prime q/P}\overline{\bigg(\sum_{p\in\mathbb{Z}_P}u[p^\prime-p]e^{-2\pi i(p^\prime-p)q/P}\bigg)}\\
&=\sum_{p^\prime\in\mathbb{Z}_P}u[p^\prime]e^{-2\pi ip^\prime q/P}\overline{\bigg(\sum_{p^{\prime\prime}\in\mathbb{Z}_P}u[p^{\prime\prime}]e^{-2\pi ip^{\prime\prime}q/P}\bigg)}
=|\langle u,f_q\rangle|^2.
\end{align*}
As such, if one has the intensity measurements $\{|\langle u,f_q\rangle|^2\}_{q\in\mathbb{Z}_P}$, then one may compute the circular autocorrelation $\operatorname{CirAut}(u)$ by applying the inverse DFT.
In order to perform phase retrieval from $\{|\langle u,f_q\rangle|^2\}_{q\in\mathbb{Z}_P}$, it therefore suffices to determine $u$ from $\operatorname{CirAut}(u)$.
This is the motivation for our approach in this section.

To see how to ``invert'' $\operatorname{CirAut}$, let's consider an example.
Take $x=(a,b,c)\in\mathbb{C}^3$ and consider the circular autocorrelation of $x$ as a signal in $\ell(\mathbb{Z}_{3})$:
\begin{align*}
\operatorname{CirAut}(x)=(|a|^2+|b|^2+|c|^2,a\overline{c}+b\overline{a}+c\overline{b},a\overline{b}+b\overline{c}+c\overline{a}).
\end{align*}
Notice that every entry of $\operatorname{CirAut}(x)$ is a nonlinear combination of the entries of $x$, from which it is unclear how to compute the entries of $x$.
To simplify the structure, we pad $x$ with zeros and enforce even symmetry; then the circular autocorrelation of $u:=(2a,b,c,0,0,0,0,c,b)\in\ell(\mathbb{Z}_{9})$ is
\begin{align}
\label{eq.CirAut example}
\operatorname{CirAut}(u)=(4|a|^2&+|b|^2+|c|^2,2\operatorname{Re}(2a\overline{b}+b\overline{c}),|b|^2+4\operatorname{Re}(a\overline{c}),2\operatorname{Re}(b\overline{c}),|c|^2,\notag\\
&|c|^2,2\operatorname{Re}(b\overline{c}),|b|^2+4\operatorname{Re}(a\overline{c}),2\operatorname{Re}(2a\overline{b}+b\overline{c})).
\end{align}
Although it still appears rather complicated, this circular autocorrelation actually lends itself well to recovering the entries of $x$.

Before explaining this further, first note that $9=4(3)-3$, and we can generalize our mapping $x\mapsto u$ by sending vectors in $\mathbb{C}^M$ to members of $\ell(\mathbb{Z}_{4M-3})$.
To make this clear, consider the reversal operator $R\colon\ell(\mathbb{Z}_P)\rightarrow\ell(\mathbb{Z}_P)$ defined by $(Ru)[p]=u[-p]$.
Then given a vector $x\in\mathbb{C}^M$, padding with zeros and enforcing even symmetry is equivalent to embedding $x$ in $\ell(\mathbb{Z}_{4M-3})$ by appending $3M-3$ zeros to $x$ and then taking $u=x+Rx\in\ell(\mathbb{Z}_{4M-3})$.
(From this point forward we use $x$ to represent both the original signal in $\mathbb{C}^M$ and the version of $x$ embedded in $\ell(\mathbb{Z}_{4M-3})$ via zero-padding; the distinction will be clear from context.)
Computing $x\in\mathbb{C}^M$ then reduces to determining the first $M$ entries of $x\in\ell(\mathbb{Z}_{4M-3})$ from $\operatorname{CirAut}(x+Rx)$.
If $x$ is completely real-valued, then this is indeed possible.
For instance, consider the circular autocorrelation \eqref{eq.CirAut example}.
If the entries of $x$ are all real, then this becomes
\begin{align*}
\operatorname{CirAut}(x+Rx)=(4a^2+b^2+c^2,4ab+2bc,b^2+4ac,2bc,c^2,c^2,2bc,b^2+4ac,4ab+2bc).
\end{align*}
Since $\operatorname{CirAut}(x+Rx)[4]=c^2$, we simply take a square root to obtain $c$ up to a sign.
Assuming $c$ is nonzero, we then divide $\operatorname{CirAut}(x+Rx)[3]$ by 2c to determine $b$ up to the same sign.
Then subtracting $b^2$ from $\operatorname{CirAut}(x+Rx)[2]$ and dividing by $4c$ gives $a$ up to the same sign.

From this example, we see that the process of recovering the entries of $x$ from $\operatorname{CirAut}(x+Rx)$ is iterative, working backward through its first $2M-2$ entries.
But what happens if $c$ is zero?
Fortunately, our process doesn't break:
In this case, we have
\begin{align*}
\operatorname{CirAut}(x+Rx)=(4a^2+b^2,4ab,b^2,0,0,0,0,b^2,4ab).
\end{align*}
Thus, we need only start with $\operatorname{CirAut}(x+Rx)[2]$ to determine the remaining entries of $x$ up to a sign.
This observation brings to light the important role of the last nonzero entry of $x$ in our iteration.
The relationship between this coordinate and the entries of $\operatorname{CirAut}(x+Rx)$ will become more rigorous later.

The above example illustrated how a real signal $x$ is determined by $\operatorname{CirAut}(x+Rx)$.
A complex-valued signal, on the other hand, is not completely determined from $\operatorname{CirAut}(x+Rx)$.
Luckily, this can be fixed by introducing a second vector in $\ell(\mathbb{Z}_{4M-3})$ obtained from $x$, and we will demonstrate this later, but for now we focus on $x+Rx$.
To this end, let's first take a closer look at the entries of $\operatorname{CirAut}(x+Rx)$.
Since this circular autocorrelation has even symmetry by construction, we need only consider all entries of $\operatorname{CirAut}(x+Rx)$ up to index $2M-2$.
This leads to the following lemma:

\begin{lemma}
\label{lem.CirAut reduction}
Let $x$ denote an $M$-dimensional complex signal embedded in $\ell(\mathbb{Z}_{4M-3})$ such that $x[p]=0$ for all $p=M,\ldots,4M-4$.
Then $\operatorname{CirAut}(x+Rx)[p]=2\operatorname{Re}\langle x,T^px\rangle+\langle x,RT^{-p}x\rangle$ for all $p=1,\ldots,2M-2$.
\end{lemma}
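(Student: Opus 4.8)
The plan is to expand the circular autocorrelation of $x+Rx$ by sesquilinearity and then to identify each of the four resulting terms. Writing $\operatorname{CirAut}(u)[p]=\langle u,T^pu\rangle$ with $u=x+Rx$ and distributing over the two slots gives
\begin{equation*}
\operatorname{CirAut}(x+Rx)[p]=\langle x,T^px\rangle+\langle Rx,T^pRx\rangle+\langle x,T^pRx\rangle+\langle Rx,T^px\rangle .
\end{equation*}
I would then treat these four terms separately, aiming to show that the first two combine into $2\operatorname{Re}\langle x,T^px\rangle$, that the third equals the cross term $\langle x,RT^{-p}x\rangle$, and that the fourth vanishes on the stated range of $p$.

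The first three terms are a matter of routine operator bookkeeping, valid for every $p$. The facts I would use are that $R$ and $T$ are unitary on $\ell(\mathbb{Z}_{4M-3})$, that $(T^p)^*=T^{-p}$, and that reversal conjugates translation via $T^pR=RT^{-p}$ (equivalently $RT^p=T^{-p}R$). Applying $T^pR=RT^{-p}$ and unitarity of $R$ to the second term yields $\langle Rx,T^pRx\rangle=\langle Rx,RT^{-p}x\rangle=\langle x,T^{-p}x\rangle=\overline{\langle x,T^px\rangle}$, so that the first two terms sum to $2\operatorname{Re}\langle x,T^px\rangle$. The same commutation relation turns the third term into $\langle x,T^pRx\rangle=\langle x,RT^{-p}x\rangle$, which is exactly the cross term appearing in the statement.

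The only substantive step, and the one where the support hypothesis on $x$ and the particular ambient dimension $4M-3$ are used, is showing that the fourth term $\langle Rx,T^px\rangle$ vanishes for $p=1,\ldots,2M-2$. I would argue this by support disjointness. Since $x[p']=0$ unless $p'\in\{0,\ldots,M-1\}$, the vector $Rx$ is supported on $\{0\}\cup\{3M-2,\ldots,4M-4\}$, while $T^px$ is supported on $\{p,\ldots,p+M-1\}$, all residues taken modulo $4M-3$. For $1\leq p\leq 2M-2$ the latter set lies in $\{1,\ldots,3M-3\}$, which is disjoint from $\{0\}\cup\{3M-2,\ldots,4M-4\}$; hence the summand $(Rx)[p']\,\overline{(T^px)[p']}$ is identically zero, and so is the inner product. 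The main obstacle is precisely this index bookkeeping: one must check that the two supports never overlap across the whole range of $p$ and, crucially, that wrap-around modulo $4M-3$ does not secretly reintroduce an overlap. Because the upper end $p+M-1$ of the translated support reaches only $3M-3<3M-2$ at $p=2M-2$, while its lower end $p\geq 1$ stays above $0$, the supports remain separated without wrapping, which is exactly where the choice $P=4M-3$ earns its keep. Collecting the four contributions then yields the claimed identity.
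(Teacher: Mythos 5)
Your proposal is correct and follows essentially the same route as the paper: expand $\langle x+Rx,T^p(x+Rx)\rangle$ by sesquilinearity, combine two terms into $2\operatorname{Re}\langle x,T^px\rangle$, identify the surviving cross term as $\langle x,RT^{-p}x\rangle$, and kill the remaining term $\langle Rx,T^px\rangle=\langle x,RT^px\rangle$ by the same support/index argument over $\mathbb{Z}_{4M-3}$. The only difference is cosmetic: you phrase the vanishing step as disjointness of the supports of $Rx$ and $T^px$, whereas the paper checks directly that the summand $x[p']\overline{x[-p'-p]}$ is identically zero.
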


\begin{proof}
First note that by the definition of the circular autocorrelation in \eqref{eq.CirAut def} we have
\begin{equation*}
\operatorname{CirAut}(x+Rx)[p]
=\langle x+Rx,T^p(x+Rx)\rangle
=2\operatorname{Re}\langle x,T^px\rangle+\langle x,RT^{-p}x\rangle+\langle x,RT^px\rangle.
\end{equation*}
Thus, to complete the proof it suffices to show that $\langle x,RT^px\rangle=0$ for all $p=1,\ldots,2M-2$.
Since $x$ is only nonzero in its first $M$ entries, we have
\begin{equation*}
\langle x,RT^px\rangle=\sum_{p^\prime=0}^{M-1}x[p^\prime]\overline{(RT^px)[p^\prime]}=\sum_{p^\prime=0}^{M-1}x[p^\prime]\overline{(T^px)[-p^\prime]}=\sum_{p^\prime=0}^{M-1}x[p^\prime]\overline{x[-p^\prime-p]},
\end{equation*}
where the summand is zero whenever $-p^\prime-p\notin[0,M-1]$ modulo $4M-3$.
This is equivalent to having $-p$ not lie in the Minkowski sum $p^\prime+[0,M-1]$, and since $p^\prime\in[0,M-1]$ we see that $\langle x,RT^px\rangle=0$ for all $p=1,\ldots,2M-2$.
\end{proof}

As a consequence of Lemma~\ref{lem.CirAut reduction}, the following theorem expresses the entries of $\operatorname{CirAut}(x+Rx)$ in terms of the entries of $x$:

\begin{theorem}
\label{thm.CirAut reduction}
Let $x$ denote an $M$-dimensional complex signal embedded in $\ell(\mathbb{Z}_{4M-3})$ such that $x[p]=0$ for all $p=M,\ldots,4M-4$.
Then we have
\begin{align}
\label{eq.thm 2}
\operatorname{CirAut}(x+Rx)[p]=\begin{cases}
\displaystyle{2\operatorname{Re}\bigg(\sum_{p^\prime=\frac{p+1}{2}}^{M-1}x[p^\prime](\overline{x[p^\prime-p]}+\overline{x[p-p^\prime]})\bigg)}&\;\text{if $p$ is odd}\\
\displaystyle{2\operatorname{Re}\bigg(\sum_{p^\prime=\frac{p}{2}+1}^{M-1}x[p^\prime](\overline{x[p^\prime-p]}+\overline{x[p-p^\prime]})\bigg)+\left|x[\tfrac{p}{2}]\right|^2}&\;\text{if $p$ is even}
\end{cases}
\end{align}
for all $p=1,\ldots,2M-2$.
\end{theorem}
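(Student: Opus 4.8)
The plan is to begin from Lemma~\ref{lem.CirAut reduction}, which already reduces $\operatorname{CirAut}(x+Rx)[p]$ to the two terms $2\operatorname{Re}\langle x,T^px\rangle$ and $\langle x,RT^{-p}x\rangle$, and then to expand each inner product entrywise over the support $\{0,\ldots,M-1\}$ of $x$. Exactly as in the proof of Lemma~\ref{lem.CirAut reduction}, I would write $\langle x,T^px\rangle=\sum_{p'=0}^{M-1}x[p']\overline{x[p'-p]}$ and $\langle x,RT^{-p}x\rangle=\sum_{p'=0}^{M-1}x[p']\overline{x[p-p']}$, and then use the support condition together with the fact that $1\le p\le 2M-2$ to pin down the effective summation ranges modulo $4M-3$: the factor $\overline{x[p'-p]}$ forces $p'\ge p$, while the factor $\overline{x[p-p']}$ forces $p-M+1\le p'\le p$ (and in both cases the ``wrap-around'' alternative is excluded because $p$ is too small). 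This is the same bookkeeping used to kill the $\langle x,RT^px\rangle$ term in Lemma~\ref{lem.CirAut reduction}.

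The key observation is that the index set $\{p':\,p-M+1\le p'\le p,\ 0\le p'\le M-1\}$ appearing in $\langle x,RT^{-p}x\rangle$ is symmetric under the involution $p'\mapsto p-p'$, since its two endpoints sum to $p$. Applying this involution sends the summand $x[p']\overline{x[p-p']}$ to its own complex conjugate, so the sum is real and folds into $2\operatorname{Re}$ of its ``upper half''. Here the even/odd dichotomy of \eqref{eq.thm 2} arises naturally: when $p$ is odd the involution has no integer fixed point and the fold starts at $p'=\tfrac{p+1}{2}$, whereas when $p$ is even the fixed point $p'=\tfrac{p}{2}$ is left over and contributes the diagonal term $x[\tfrac{p}{2}]\overline{x[\tfrac{p}{2}]}=|x[\tfrac p2]|^2$, with the fold then starting at $p'=\tfrac p2+1$.

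It remains to recombine the folded $RT^{-p}$ term with $2\operatorname{Re}\langle x,T^px\rangle$ into the single sum of \eqref{eq.thm 2}. I would do this by extending both folded sums to the common range $p'=\lceil\tfrac{p+1}{2}\rceil,\ldots,M-1$ and grouping the two summands under a common factor $x[p']$, giving $x[p']\big(\overline{x[p'-p]}+\overline{x[p-p']}\big)$; the extension is harmless because $\overline{x[p'-p]}$ vanishes for $p'<p$ and $\overline{x[p-p']}$ vanishes for $p'>p$, so each factor is automatically supported on exactly the range where it originally appeared. I expect the main obstacle to be precisely this index bookkeeping --- verifying that the modular support conditions cut the two inner products down to the claimed ranges, that no wrap-around terms sneak in for $p$ near $2M-2$, and that the folded $RT^{-p}$ range and the $T^p$ range glue together seamlessly at $p'=p$ without double-counting. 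Once the ranges are checked, the even/odd split and the leftover $|x[\tfrac p2]|^2$ term fall out immediately.
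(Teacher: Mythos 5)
Your proposal is correct and follows essentially the same route as the paper's own proof: invoke Lemma~\ref{lem.CirAut reduction}, cut the two inner products down to their effective ranges using the support of $x$, fold the $\langle x,RT^{-p}x\rangle$ sum via the involution $p'\mapsto p-p'$ (which produces the odd/even split and the leftover $|x[\tfrac{p}{2}]|^2$ term), and recombine with the $2\operatorname{Re}\langle x,T^px\rangle$ sum. The only cosmetic difference is that the paper treats the ranges $p\le M-1$ and $p\ge M$ as separate cases while you handle them uniformly with $\max/\min$ endpoints; your bookkeeping at $p'=p$ (no double-counting, since the combined summand there is $x[p]\cdot 2\overline{x[0]}$) checks out.
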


\begin{proof}
We first use Lemma~\ref{lem.CirAut reduction} to get
\begin{align}
\operatorname{CirAut}(x+Rx)[p]
\nonumber
&=2\operatorname{Re}\langle x,T^px\rangle+\langle x,RT^{-p}x\rangle\\
\nonumber
&=2\operatorname{Re}\bigg(\sum_{p^\prime=0}^{M-1}x[p^\prime]\overline{x[p^\prime-p]}\bigg)+\sum_{p^\prime=0}^{M-1}x[p^\prime]\overline{x[p-p^\prime]}\\
\label{eq.ciraut expression}
&=2\operatorname{Re}\bigg(\sum_{p^\prime=p}^{M-1}x[p^\prime]\overline{x[p^\prime-p]}\bigg)+\sum_{p^\prime=\max\{p-(M-1),0\}}^{\min\{p,M-1\}}x[p^\prime]\overline{x[p-p^\prime]},
\end{align}
where the last equality takes into account that the first summand is nonzero only when $p^\prime-p\in[0,M-1]$ and the second summand is nonzero only when $p-p^\prime\in[0,M-1]$, i.e., when $p^\prime\in[p,p+(M-1)]$ and $p^\prime\in[p-(M-1),p]$, respectively.
To continue, we divide our analysis into cases.

For $p=1,\ldots,M-1$, \eqref{eq.ciraut expression} gives
\begin{equation}
\label{eq.thm2 CirAut1}
\operatorname{CirAut}(x+Rx)[p]
=2\operatorname{Re}\bigg(\sum_{p^\prime=p}^{M-1}x[p^\prime]\overline{x[p^\prime-p]}\bigg)+\sum_{p^\prime=0}^{p}x[p^\prime]\overline{x[p-p^\prime]}.
\end{equation}
If $p$ is odd we can then write
\begin{align}
\label{eq.thm2 CirAut1 odd}
\sum_{p^\prime=0}^{p}x[p^\prime]\overline{x[p-p^\prime]}
&=\sum_{p^\prime=0}^{\frac{p-1}{2}}x[p^\prime]\overline{x[p-p^\prime]}+\sum_{p^\prime=\frac{p+1}{2}}^{p}x[p^\prime]\overline{x[p-p^\prime]}\notag\\
&=\sum_{p^{\prime\prime}=\frac{p+1}{2}}^{p}x[p-p^{\prime\prime}]\overline{x[p^{\prime\prime}]}+\sum_{p^\prime=\frac{p+1}{2}}^{p}x[p^\prime]\overline{x[p-p^\prime]}
=2\operatorname{Re}\bigg(\sum_{p^\prime=\frac{p+1}{2}}^{p}x[p^\prime]\overline{x[p-p^\prime]}\bigg),
\end{align}
while if $p$ is even we similarly write
\begin{equation}
\label{eq.thm2 CirAut1 even}
\sum_{p^\prime=0}^{p}x[p^\prime]\overline{x[p-p^\prime]}
=2\operatorname{Re}\bigg(\sum_{p^\prime=\frac{p}{2}+1}^{p}x[p^\prime]\overline{x[p-p^\prime]}\bigg)+\left|x\big[\tfrac{p}{2}\big]\right|^2.
\end{equation}
Substituting \eqref{eq.thm2 CirAut1 odd} and \eqref{eq.thm2 CirAut1 even} into \eqref{eq.thm2 CirAut1} then gives \eqref{eq.thm 2}.

For the remaining case, $p=M,\ldots,2M-2$ and \eqref{eq.ciraut expression} gives
\begin{equation}
\label{eq.thm2 CirAut2}
\operatorname{CirAut}(x+Rx)[p]
=\sum_{p^\prime=p-(M-1)}^{M-1}x[p^\prime]\overline{x[p-p^\prime]}.
\end{equation}
Similar to the previous case, taking $p$ to be odd yields
\begin{equation}
\label{eq.thm2 CirAut2 odd}
\sum_{p^\prime=p-(M-1)}^{M-1}x[p^\prime]\overline{x[p-p^\prime]}
=2\operatorname{Re}\bigg(\sum_{p^\prime=\frac{p+1}{2}}^{M-1}x[p^\prime]\overline{x[p-p^\prime]}\bigg),
\end{equation}
while taking $p$ to be even yields
\begin{equation}
\label{eq.thm2 CirAut2 even}
\sum_{p^\prime=p-(M-1)}^{M-1}x[p^\prime]\overline{x[p-p^\prime]}
=2\operatorname{Re}\bigg(\sum_{p^\prime=\frac{p}{2}+1}^{M-1}x[p^\prime]\overline{x[p-p^\prime]}\bigg)+\left|x\big[\tfrac{p}{2}\big]\right|^2,
\end{equation}
and substituting \eqref{eq.thm2 CirAut2 odd} and \eqref{eq.thm2 CirAut2 even} into \eqref{eq.thm2 CirAut2} also gives \eqref{eq.thm 2}.
\end{proof}

Notice \eqref{eq.thm 2} shows that each member of $\{\operatorname{CirAut}(x+Rx)[p]\}_{p=1}^{2M-2}$ can be written as a combination of the first $M$ entries of $x$, but only those at or beyond the $\lceil\frac{p}{2}\rceil$th index.
As such, the index of the last nonzero entry of $x$ is closely related to that of the last nonzero entry of $\{\operatorname{CirAut}(x+Rx)[p]\}_{p=1}^{2M-2}$.
This corresponds to our observation earlier in the case of $x\in\mathbb{R}^3$ where the third coordinate was assumed to be zero.
We identify the relationship between the locations of these nonzero entries in the following lemma:

\begin{lemma}
\label{lem.CirAut last nonzero entry even index}
Let $x$ denote an $M$-dimensional complex signal embedded in $\ell(\mathbb{Z}_{4M-3})$ such that $x[p]=0$ for all $p=M,\ldots,4M-4$.
Then the last nonzero entry of $\{\operatorname{CirAut}(x+Rx)[p]\}_{p=0}^{2M-2}$ has index $p=2q$, where $q$ is the index of the last nonzero entry of $x$.
\end{lemma}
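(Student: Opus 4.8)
The plan is to read off both halves of the claim directly from the explicit formula \eqref{eq.thm 2} in Theorem~\ref{thm.CirAut reduction}, exploiting its one salient structural feature: the $p$th entry of $\operatorname{CirAut}(x+Rx)$ is built only from the coordinates $x[p']$ with $p'\geq\lceil\frac{p}{2}\rceil$. By the definition of $q$ we have $x[p']=0$ for every $p'>q$ (and indeed $x[p']=0$ for all $p'\geq M$), so the whole argument reduces to tracking when the smallest summation index appearing in \eqref{eq.thm 2} is forced to exceed $q$. Note that $0\leq q\leq M-1$ guarantees $2q\in\{0,\ldots,2M-2\}$, so the target index lies in the stated range.

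First I would show that $\operatorname{CirAut}(x+Rx)[p]=0$ for every $p$ with $2q<p\leq2M-2$. In both the odd and even branches of \eqref{eq.thm 2}, the smallest index $p'$ that appears, either in the summation or in the isolated term $|x[\tfrac{p}{2}]|^2$, is exactly $\lceil\frac{p}{2}\rceil$. When $p>2q$ we have $\lceil\frac{p}{2}\rceil\geq q+1$, so every coordinate $x[p']$ occurring in the expression satisfies $p'>q$ and hence vanishes; in the even case one checks in particular that $p/2\geq q+1$, so the term $|x[\tfrac{p}{2}]|^2$ is also zero. Thus the entire expression vanishes, which shows that the last nonzero entry of $\{\operatorname{CirAut}(x+Rx)[p]\}_{p=0}^{2M-2}$ occurs at an index no larger than $2q$.

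Next I would verify that index $2q$ is genuinely nonzero. Assuming $q\geq1$, the index $p=2q\geq2$ is even, so \eqref{eq.thm 2} gives $\operatorname{CirAut}(x+Rx)[2q]=2\operatorname{Re}(\sum_{p'=q+1}^{M-1}x[p'](\overline{x[p'-2q]}+\overline{x[2q-p']}))+|x[q]|^2$. Every summand runs over indices $p'\geq q+1>q$, so each $x[p']=0$ and the sum collapses entirely, leaving $\operatorname{CirAut}(x+Rx)[2q]=|x[q]|^2$, which is strictly positive because $x[q]\neq0$ by the choice of $q$. Combined with the previous paragraph, this pins the last nonzero entry exactly at index $2q$.

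The only situation not covered by \eqref{eq.thm 2}, which is stated for $p\geq1$, is $q=0$; there I would simply note that $x+Rx$ is supported at the single index $0$ with value $2x[0]$, so that $\operatorname{CirAut}(x+Rx)[0]=4|x[0]|^2\neq0$ while the previous paragraph already forces every entry with $0<p\leq2M-2$ to vanish, again matching $2q=0$. I do not anticipate a genuine obstacle: the content is essentially bookkeeping on top of Theorem~\ref{thm.CirAut reduction}, and the only point demanding a little care is handling the odd and even parities together and the isolated term $|x[\tfrac{p}{2}]|^2$ uniformly through the single bound $\lceil\frac{p}{2}\rceil>q$.
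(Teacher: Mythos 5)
Your proof is correct and follows essentially the same route as the paper's: both read the vanishing for $p>2q$ and the value $\operatorname{CirAut}(x+Rx)[2q]=|x[q]|^2$ directly off formula \eqref{eq.thm 2}, and both handle $q=0$ separately via $\operatorname{CirAut}(x+Rx)[0]=\|x+Rx\|^2=4|x[0]|^2$. Your version merely makes explicit the bookkeeping that the smallest index appearing in either branch of \eqref{eq.thm 2} is $\lceil p/2\rceil$, which the paper leaves implicit.
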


\begin{proof}
If $q\geq1$, then \eqref{eq.thm 2} gives that $\operatorname{CirAut}(x+Rx)[2q]=|x[q]|^2\neq0$.
Note that since $x[p']=0$ for every $p'>q$, \eqref{eq.thm 2} also gives that $\operatorname{CirAut}(x+Rx)[p]=0$ for every $p>2q$.
For the remaining case where $q=0$, \eqref{eq.thm 2} immediately gives that $\operatorname{CirAut}(x+Rx)[p]=0$ for every $p\geq1$.
To show that $\operatorname{CirAut}(x+Rx)[0]\neq0$ in this case, we apply the definition of circular autocorrelation \eqref{eq.CirAut def}:
\begin{equation*}
\operatorname{CirAut}(x+Rx)[0]
=\langle x+Rx,x+Rx\rangle
=\|x+Rx\|^2
=|2x[0]|^2
\neq0,
\end{equation*}
where the last equality uses the fact that $x$ is only supported at $0$ since $q=0$.
\end{proof}

As previously mentioned, we are unable to recover the entries of a complex signal $x$ solely from $\operatorname{CirAut}(x+Rx)$.
One way to address this is to rotate the entries of $x$ in the complex plane and also take the circular autocorrelation of this modified signal.
If we rotate by an angle which is not an integer multiple of $\pi$, this will produce new entries which are linearly independent from the corresponding entries of $x$ when viewed as vectors in the complex plane.
As we will see, the problem of recovering the entries of $x$ then reduces to solving a linear system.

Take any $(4M-3)\times(4M-3)$ diagonal modulation operator $E$ whose diagonal entries $\{\omega_k\}_{k=0}^{4M-4}$ are of unit modulus satisfying $\omega_j\overline{\omega_k}\notin\mathbb{R}$ for all $j\neq k$ and consider the new vector $Ex\in\ell(\mathbb{Z}_{4M-3})$.
Then Theorem~\ref{thm.CirAut reduction} gives
\begin{align}
\label{eq.thm 2 for Mx}
&\operatorname{CirAut}(Ex+REx)[p]\notag\\
&\qquad\qquad=\begin{cases}
\displaystyle{2\operatorname{Re}\bigg(\sum_{p^\prime=\frac{p+1}{2}}^{M-1}\omega_{p^\prime}x[p^\prime](\overline{\omega_{p^\prime-p}}\overline{x[p^\prime-p]}+\overline{\omega_{p-p^\prime}}\overline{x[p-p^\prime]})\bigg)}&\;\text{if $p$ is odd}\\
\displaystyle{2\operatorname{Re}\bigg(\sum_{p^\prime=\frac{p}{2}+1}^{M-1}\omega_{p^\prime}x[p^\prime](\overline{\omega_{p^\prime-p}}\overline{x[p^\prime-p]}+\overline{\omega_{p-p^\prime}}\overline{x[p-p^\prime]})\bigg)+\left|x[\tfrac{p}{2}]\right|^2}&\;\text{if $p$ is even}
\end{cases}
\end{align}
for all $p=1,\ldots,2M-2$.
We will see that \eqref{eq.thm 2} and \eqref{eq.thm 2 for Mx} together allow us to solve for the entries of $x$ (up to a global phase factor) by working iteratively backward through the entries of $\operatorname{CirAut}(x+Rx)$ and $\operatorname{CirAut}(Ex+REx)$.
As alluded to earlier, each entry index forms a linear system which can be solved using the following lemma:

\begin{lemma}
\label{lem.modulus isolation}
Let $a,b\in\mathbb{C}\setminus\{0\}$ and $\omega\in\mathbb{C}\setminus\mathbb{R}$ with $|\omega|=1$.
Then
\begin{equation}
\label{eq.modulus isolation}
b=\frac{i}{\overline{a}\operatorname{Im}(\omega)}\big(\operatorname{Re}(\omega a\overline{b})-\omega\operatorname{Re}(a\overline{b})\big).
\end{equation}
\end{lemma}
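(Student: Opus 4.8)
The plan is to treat \eqref{eq.modulus isolation} as a pure algebraic identity and verify it by collapsing everything to a single complex variable. The conceptual motivation is that we know two real numbers, $\operatorname{Re}(a\overline{b})$ and $\operatorname{Re}(\omega a\overline{b})$, which are the real parts of one complex number read off along two non-parallel directions (non-parallel precisely because $\omega\notin\mathbb{R}$); such data must pin down the complex number $a\overline{b}$, and then $b$ follows upon dividing by $\overline{a}\neq0$.

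To carry this out, I would introduce $w:=a\overline{b}$, so that the bracketed expression on the right-hand side of \eqref{eq.modulus isolation} reads $\operatorname{Re}(\omega w)-\omega\operatorname{Re}(w)$, and reduce the whole lemma to the single identity
\[
\operatorname{Re}(\omega w)-\omega\operatorname{Re}(w)=-i\operatorname{Im}(\omega)\,\overline{w}.
\]
I would prove this by writing $\omega=\operatorname{Re}(\omega)+i\operatorname{Im}(\omega)$ and expanding both terms: the two copies of $\operatorname{Re}(\omega)\operatorname{Re}(w)$ cancel, leaving $-\operatorname{Im}(\omega)\operatorname{Im}(w)-i\operatorname{Im}(\omega)\operatorname{Re}(w)$, which factors as $-i\operatorname{Im}(\omega)\big(\operatorname{Re}(w)-i\operatorname{Im}(w)\big)=-i\operatorname{Im}(\omega)\overline{w}$.

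Multiplying by $i$ turns the right-hand side of the displayed identity into $\operatorname{Im}(\omega)\overline{w}$, and since $\overline{w}=\overline{a\overline{b}}=\overline{a}\,b$, dividing through by $\overline{a}\operatorname{Im}(\omega)$ returns exactly $b$. The hypothesis $\omega\notin\mathbb{R}$ is used solely to guarantee $\operatorname{Im}(\omega)\neq0$ so the division is valid, and $a\neq0$ gives $\overline{a}\neq0$; the assumption $b\neq0$ is not needed for the identity itself but records the nonvanishing required when the lemma is invoked in the iterative reconstruction.

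I do not expect a genuine obstacle here: the only care needed is the bookkeeping of real and imaginary parts, and the one observation that makes the calculation close is recognizing $\operatorname{Re}(w)-i\operatorname{Im}(w)=\overline{w}$ after the cancellation. A marginally more structural alternative would be to set up the $2\times2$ real linear system expressing the two measured real parts in terms of $(\operatorname{Re}(w),\operatorname{Im}(w))$ and note that its coefficient determinant equals $\operatorname{Im}(\omega)\neq0$; but the direct expansion above is shorter, so that is the route I would present.
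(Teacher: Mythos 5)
Your proof is correct, and it is essentially the paper's argument: both hinge on the fact that $\operatorname{Re}(a\overline{b})$ and $\operatorname{Re}(\omega a\overline{b})$ together determine $\operatorname{Im}(a\overline{b})$ because $\operatorname{Im}(\omega)\neq 0$, the paper writing this in polar form via $\arg(\omega)$, $\arg(a\overline{b})$ and the cosine addition formula where you expand directly in Cartesian components. Your single-identity formulation $\operatorname{Re}(\omega w)-\omega\operatorname{Re}(w)=-i\operatorname{Im}(\omega)\overline{w}$ is a slightly cleaner packaging of the same computation, and your side remark that $b\neq 0$ is not needed for the identity itself is accurate.
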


\begin{proof}
Define $\theta:=\operatorname{arg}(\omega)$ and $\phi:=\operatorname{arg}(a\overline{b})$.
Then $\theta+\phi\equiv\operatorname{arg}(\omega ab)\bmod2\pi$ and
\begin{equation*}
\cos(\phi)=\frac{\operatorname{Re}(a\overline{b})}{|a\overline{b}|},
\qquad
\sin(\phi)=\frac{\operatorname{Im}(a\overline{b})}{|a\overline{b}|},
\qquad
\cos(\theta+\phi)=\frac{\operatorname{Re}(\omega a\overline{b})}{|\omega a\overline{b}|}.
\end{equation*}
With this, we apply a trigonometric identity to obtain
\begin{equation*}
\operatorname{Re}(\omega a\overline{b})
=|\omega a\overline{b}|\cos(\theta+\phi)
=|a\overline{b}|\left(\cos(\theta)\cos(\phi)-\sin(\theta)\sin(\phi)\right)
=\cos(\theta)\operatorname{Re}(a\overline{b})-\sin(\theta)\operatorname{Im}(a\overline{b}).
\end{equation*}
Since $\omega\in\mathbb{C}\setminus\mathbb{R}$, then $\sin(\theta)$ is necessarily nonzero, and so we can isolate $\operatorname{Im}(a\overline{b})$ in the above equation.
We then use this expression for $\operatorname{Im}(a\overline{b})$ to solve for $b$:
\begin{equation*}
b
=\frac{\overline{a}b}{~\overline{a}~}
=\frac{1}{~\overline{a}~}\big(\operatorname{Re}(a\overline{b})-i\operatorname{Im}(a\overline{b})\big)
=\frac{i}{\overline{a}\sin(\theta)}\big(\operatorname{Re}(\omega a\overline{b})-e^{i\theta}\operatorname{Re}(a\overline{b})\big).
\qedhere
\end{equation*}
\end{proof}

We now use this lemma to describe how to recover $x$ up to global phase.
By Lemma~\ref{lem.CirAut last nonzero entry even index}, the last nonzero entry of $\{\operatorname{CirAut}(x+Rx)[p]\}_{p=0}^{2M-2}$ has index $p=2q$, where $q$ indexes the last nonzero entry of $x$.
As such, we know that $x[k]=0$ for every $k>q$, and $x[q]$ can be estimated up to a phase factor ($\hat{x}[q]=e^{i\psi}x[q]$) by taking the square root of $\operatorname{CirAut}(x+Rx)[2q]=|x[q]|^2$ (we will verify this soon, but this corresponds to the examples we have seen so far).
Next, if we know $\operatorname{Re}(x[q]\overline{x[k]})$ and $\operatorname{Re}(\omega_q\overline{\omega_k}x[q]\overline{x[k]})$ for some $k<q$, then we can use these to estimate $x[k]$:
\begin{equation}
\label{eq.x[k] solution}
\hat{x}[k]
:=\frac{i}{\overline{\hat{x}[q]}\operatorname{Im}(\omega_q\overline{\omega_k})}\left(\operatorname{Re}(\omega_q\overline{\omega_k}x[q]\overline{x[k]})-\omega_q\overline{\omega_k}\operatorname{Re}(x[q]\overline{x[k]})\right)
=e^{i\psi}x[k],
\end{equation}
where the last equality follows from substituting $a=x[q]$, $b=x[k]$ and $\omega=\omega_q\overline{\omega_k}$ into \eqref{eq.modulus isolation}.
Overall, once we know $x[q]$ up to phase, then we can find $x[k]$ relative to this same phase for each $k=0,\ldots,q-1$, provided we know $\operatorname{Re}(x[q]\overline{x[k]})$ and $\operatorname{Re}(\omega_q\overline{\omega_k}x[q]\overline{x[k]})$ for these $k$'s.
Thankfully, these values can be determined from the entries of $\operatorname{CirAut}(x+Rx)$ and $\operatorname{CirAut}(Ex+REx)$:

\begin{theorem}
\label{thm.CirAut recovers x}
Let $x$ denote an $M$-dimensional complex signal embedded in $\ell(\mathbb{Z}_{4M-3})$ such that $x[p]=0$ for all $p=M,\ldots,4M-4$ and $E$ be a $(4M-3)\times(4M-3)$ diagonal modulation operator with diagonal entries $\{\omega_k\}_{k=0}^{4M-4}$ satisfying $|\omega_k|=1$ for all $k=0,\ldots,4M-4$ and $\omega_j\overline{\omega_k}\notin\mathbb{R}$ for all $j\neq k$.
Then $x$ can be recovered up to a global phase factor from $\operatorname{CirAut}(x+Rx)$ and $\operatorname{CirAut}(Ex+REx)$.
\end{theorem}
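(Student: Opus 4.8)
The plan is to exploit the structure already assembled in the lead-up to the statement. By Lemma~\ref{lem.CirAut last nonzero entry even index} we first read off the index $q$ of the last nonzero entry of $x$ as half the index of the last nonzero entry of $\operatorname{CirAut}(x+Rx)$. We then recover the entries $x[q],x[q-1],\ldots,x[0]$ in decreasing order, each relative to one fixed global phase. First I would settle the base of the iteration: since $\operatorname{CirAut}(x+Rx)[2q]=|x[q]|^2$ by \eqref{eq.thm 2} (or by the $q=0$ computation in Lemma~\ref{lem.CirAut last nonzero entry even index}), setting $\hat{x}[q]$ to the nonnegative square root produces $\hat{x}[q]=e^{i\psi}x[q]$ with $\psi=-\arg x[q]$, and I would set $\hat{x}[k]:=0$ for $k>q$. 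This choice of $\psi$ is the single global phase ambiguity carried throughout.

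For the inductive step, suppose $\hat{x}[j]=e^{i\psi}x[j]$ has been determined for every $j>k$, where $0\le k<q$. The key observation is that evaluating \eqref{eq.thm 2} at the index $p=q+k$ isolates the unknown $x[k]$: the only term involving $x[k]$ arises at $p^\prime=q$ and equals $2\operatorname{Re}(x[q]\overline{x[k]})$ (or $4\operatorname{Re}(x[q]\overline{x[0]})$ when $k=0$), using $x[-k]=0$; every other term, together with the extra $|x[(q+k)/2]|^2$ present when $q+k$ is even, is a product $x[a]\overline{x[b]}$ or modulus $|x[a]|^2$ with $a,b>k$. I would verify this by chasing the summation ranges in \eqref{eq.thm 2}: for $p^\prime<q$ the factor $\overline{x[p^\prime-p]}$ vanishes, while $(q+k)-p^\prime$ stays strictly above $k$ because $p^\prime$ exceeds $(q+k)/2>k$. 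Since $\hat{x}[a]\overline{\hat{x}[b]}=x[a]\overline{x[b]}$ (the phase $e^{i\psi}$ cancels in each product), all these remaining terms are already known, so subtracting them recovers $\operatorname{Re}(x[q]\overline{x[k]})$. Running the identical argument on \eqref{eq.thm 2 for Mx} at index $p=q+k$ yields $\operatorname{Re}(\omega_q\overline{\omega_k}x[q]\overline{x[k]})$.

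With both real quantities in hand, I would invoke Lemma~\ref{lem.modulus isolation} with $a=x[q]$, $b=x[k]$, and $\omega=\omega_q\overline{\omega_k}$: the hypothesis $\omega_q\overline{\omega_k}\notin\mathbb{R}$ guarantees $\operatorname{Im}(\omega)\neq0$, and $a\neq0$ since $q$ indexes the last nonzero entry, so \eqref{eq.x[k] solution} returns $\hat{x}[k]=e^{i\psi}x[k]$ and closes the induction. I should record that the formula remains valid when $x[k]=0$: both real inputs then vanish and \eqref{eq.modulus isolation} returns $0$, so degenerate coordinates cause no trouble. Once the induction terminates at $k=0$, the vector $(\hat{x}[0],\ldots,\hat{x}[M-1])$ equals $e^{i\psi}x$, which is precisely recovery up to a global phase.

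The main obstacle I anticipate is the bookkeeping in the inductive step: rigorously confirming that $p=q+k$ is the correct index to probe (the alternative $p=q-k$ would reintroduce the not-yet-recovered entry $x[k-1]$), and that the odd and even cases of \eqref{eq.thm 2} really do leave $x[q]\overline{x[k]}$ as the sole unknown term after the known products are removed. Everything beyond that is a direct application of Theorem~\ref{thm.CirAut reduction}, Lemma~\ref{lem.CirAut last nonzero entry even index}, and Lemma~\ref{lem.modulus isolation}.
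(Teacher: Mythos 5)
Your proposal is correct and follows essentially the same route as the paper's proof: identify $q$ from the last nonzero entry of $\operatorname{CirAut}(x+Rx)$ via Lemma~\ref{lem.CirAut last nonzero entry even index}, take a square root at index $2q$ for the base case, and then induct downward by probing index $p=q+k$ (the paper writes this as $p=2q-(r+1)$ with $k=q-(r+1)$, which is the same index) in both autocorrelations to isolate $\operatorname{Re}(x[q]\overline{x[k]})$ and $\operatorname{Re}(\omega_q\overline{\omega_k}x[q]\overline{x[k]})$ before applying Lemma~\ref{lem.modulus isolation}. Your explicit check that the formula degrades gracefully when $x[k]=0$ is a small detail the paper leaves implicit, but the argument is otherwise identical.
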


\begin{proof}
Letting $q$ denote the last nonzero entry of $x$, it suffices to estimate $\{x[k]\}_{k=0}^q$ up to a global phase factor.
To this end, recall from Lemma~\ref{lem.CirAut last nonzero entry even index} that the last nonzero entry of $\{\operatorname{CirAut}(x+Rx)[p]\}_{p=0}^{2M-2}$ has index $p=2q$.
If $q=0$, then we have already seen that $\operatorname{CirAut}(x+Rx)[0]=4|x[0]|^2$.
Since there exists $\psi\in[0,2\pi)$ such that $x[0]=e^{-i\psi}|x[0]|$, we may take $\hat{x}[0]:=\frac{1}{2}\sqrt{\operatorname{CirAut}(x+Rx)[0]}=|x[0]|=e^{i\psi}x[0]$.
Otherwise $q\in[1,M-1]$, and \eqref{eq.thm 2} gives
\begin{align*}
\operatorname{CirAut}(x+Rx)[2q]&=\left|x[q]\right|^2+2\operatorname{Re}\bigg(\sum_{p^\prime=q+1}^{M-1}x[p^\prime](\overline{x[p^\prime-2q]}+\overline{x[2q-p^\prime]})\bigg)=\left|x[q]\right|^2.
\end{align*}
Thus, taking $\hat{x}[q]:=\sqrt{\operatorname{CirAut}(x+Rx)[2q]}=|x[q]|$ gives us $\hat{x}[q]=e^{i\psi}x[q]$ for some $\psi\in[0,2\pi)$.

In the case where $q=1$, all that remains to determine is $\hat{x}[0]$, a calculation which we save for the end of the proof.
For now, suppose $q\geq2$.
Since we already know $\hat{x}[q]=e^{i\psi}x[q]$, we would like to determine $\hat{x}[k]$ for $k=1,\ldots,q-1$.
To this end, take $r\in[0,q-2]$ and suppose we have $\hat{x}[k]=e^{i\psi}x[k]$ for all $k=q-r,\ldots,q$.
If we can obtain $\hat{x}[q-(r+1)]$ up to the same phase from this information, then working iteratively from $r=0$ to $r=q-2$ will give us $\hat{x}[k]$ up to global phase for all but the zeroth entry (which we address later).
Note when $r$ is even, \eqref{eq.thm 2} gives
\begin{align*}
\operatorname{CirAut}(x+Rx)[2q-(r+1)]
&=2\operatorname{Re}\bigg(\sum_{p^\prime=q-\frac{r}{2}}^{q}x[p^\prime](\overline{x[p^\prime-(2q-(r+1))]}+\overline{x[(2q-(r+1))-p^\prime]})\bigg)\\
&=2\operatorname{Re}\left(x[q]\overline{x[q-(r+1)]}\right)+2\sum_{p^\prime=q-\frac{r}{2}}^{q-1}\operatorname{Re}\left(x[p^\prime]\overline{x[(2q-(r+1))-p^\prime]}\right),
\end{align*}
where the last equality follows from the observation that $p^\prime-(2q-(r+1))\leq-q+(r+1)\leq-1$ over the range of the sum, meaning $x[p^\prime-(2q-(r+1))]=0$ throughout the sum.
Similarly when $r$ is odd, \eqref{eq.thm 2} gives
\begin{align*}
&\operatorname{CirAut}(x+Rx)[2q-(r+1)]\\
&\qquad\qquad=2\operatorname{Re}\left(x[q]\overline{x[q-(r+1)]}\right)+2\sum_{p^\prime=q-\frac{r-1}{2}}^{q-1}\operatorname{Re}\left(x[p^\prime]\overline{x[(2q-(r+1))-p^\prime]}\right)+\left|x\big[q-\tfrac{r+1}{2}\big]\right|^2.
\end{align*}
In either case, we can isolate $\operatorname{Re}(x[q]\overline{x[q-(r+1)]})$ to get an expression in terms of $\operatorname{CirAut}(x+Rx)[2q-(r+1)]$ and other terms of the form $\operatorname{Re}(x[k]\overline{x[k']})$ or $|x[k]|^2$ for $k,k'\in[q-r,q-1]$.
By the induction hypothesis, we have $\hat{x}[k]=e^{i\psi}x[k]$ for $k=q-r,\ldots,q-1$, and so we can use these estimates to determine these other terms:
\begin{equation*}
\operatorname{Re}(\hat{x}[k]\overline{\hat{x}[k']})
=\operatorname{Re}(e^{i\psi}x[k]\overline{e^{i\psi}x[k']})
=\operatorname{Re}(x[k]\overline{x[k']}),
\qquad
|\hat{x}[k]|^2
=|e^{i\psi}x[k]|^2
=|x[k]|^2.
\end{equation*}
As such, we can use $\operatorname{CirAut}(x+Rx)[2q-(r+1)]$ along with the higher-indexed estimates $\hat{x}[k]$ to determine $\operatorname{Re}(x[q]\overline{x[q-(r+1)]})$.
Similarly, we can use $\operatorname{CirAut}(Ex+REx)[2q-(r+1)]$ along with the higher-indexed estimates $\hat{x}[k]$ to determine $\operatorname{Re}(\omega_q\overline{\omega_{(q-(r+1))}}x[q]\overline{x[q-(r+1)]})$.
We then plug these into \eqref{eq.x[k] solution}, along with the estimate $\hat{x}[q]=e^{i\psi}x[q]$ (which is also available by the induction hypothesis), to get $\hat{x}[2q-(r+1)]=e^{i\psi}x[2q-(r+1)]$.

At this point, we have determined $\{x[k]\}_{k=1}^{q}$ up to a global phase factor whenever $q\geq1$, and so it remains to find $\hat{x}[0]$.
For this, note that when $q$ is odd, \eqref{eq.thm 2} gives
\begin{equation*}
\operatorname{CirAut}(x+Rx)[q]
=4\operatorname{Re}(x[q]\overline{x[0]})+2\sum_{p^\prime=\frac{q+1}{2}}^{q-1}\operatorname{Re}\left(x[p^\prime]\overline{x[q-p^\prime]}\right),
\end{equation*}
while for even $q$, we have
\begin{align*}
\operatorname{CirAut}(x+Rx)[q]
=4\operatorname{Re}(x[q]\overline{x[0]})+2\sum_{p^\prime=\frac{q}{2}+1}^{q-1}\operatorname{Re}\left(x[p^\prime]\overline{x[q-p^\prime]}\right)+\left|x\big[\tfrac{q}{2}\big]\right|^2.
\end{align*}
As before, isolating $\operatorname{Re}(x[q]\overline{x[0]})$ in either case produces an expression in terms of $\operatorname{CirAut}(x+Rx)[q]$ and other terms of the form $\operatorname{Re}(x[k]\overline{x[k']})$ or $|x[k]|^2$ for $k,k'\in[1,q-1]$.
These other terms can be calculated using the estimates $\{\hat{x}[k]\}_{k=1}^{q-1}$, and so we can also calculate $\operatorname{Re}(x[q]\overline{x[0]})$ from $\operatorname{CirAut}(x+Rx)[q]$.
Similarly, we can calculate $\operatorname{Re}(\omega_q\overline{\omega_0}x[q]\overline{x[0]})$ from $\{\hat{x}[k]\}_{k=1}^{q-1}$ and $\operatorname{CirAut}(Ex+REx)[q]$, and plugging these into \eqref{eq.x[k] solution} along with $\hat{x}[q]$ produces the estimate $\hat{x}[0]=e^{i\psi}x[0]$.
\end{proof}

Theorem~\ref{thm.CirAut recovers x} establishes that it is possible to recover a signal $x\in\mathbb{C}^M$ up to a global phase factor from $\{\operatorname{CirAut}(x+Rx)\}_{q=0}^{2M-2}$ and $\{\operatorname{CirAut}(Ex+REx)\}_{q=0}^{2M-2}$.
We now return to how these circular autocorrelations relate to intensity measurements.
Recall that the DFT of the circular autocorrelation is the modulus squared of the DFT of the original signal: $(F^*\operatorname{CirAut}(u))[q]=|(F^*u)[q]|^2$.
Also note that the DFT commutes with the reversal operator:
\begin{equation*}
(F^*Ru)[q]
=\sum_{p\in\mathbb{Z}_P}u[-p]e^{-2\pi ipq/P}
=\sum_{p^\prime\in\mathbb{Z}_P}u[p^\prime]e^{-2\pi ip^\prime(-q)/P}
=(F^*u)[-q]
=(RF^*u)[q].
\end{equation*}
With this, we can express $\operatorname{CirAut}(x+Rx)$ in terms of intensity measurements with a particular ensemble:
\begin{align*}
(F^*\operatorname{CirAut}(x+Rx))[q]
&=|(F^*(x+Rx))[q]|^2\\
&=|(F^*x)[q]+(F^*Rx)[q]|^2
=|(F^*x)[q]+(F^*x)[-q]|^2
=|\langle x,f_q+f_{-q}\rangle|^2.
\end{align*}
Defining the $q$th \textit{discrete cosine function} $c_q\in\ell(\mathbb{Z}_{4M-3})$ by
\begin{equation*}
c_q[p]:=2\cos\left(\tfrac{2\pi pq}{4M-3}\right)=e^{2\pi ipq/(4M-3)}+e^{-2\pi ipq/(4M-3)}=(f_q+f_{-q})[p],
\end{equation*}
this means that $(F^*\operatorname{CirAut}(x+Rx))[q]=|\langle x,c_q\rangle|^2$ for all $q\in\mathbb{Z}_{4M-3}$.
Similarly, if we take the modulation matrix $E$ to have diagonal entries $\omega_k=e^{2\pi ik/(2M-1)}$ for all $k=0,\ldots,4M-4$, we find
\begin{equation*}
(F^*\operatorname{CirAut}(Ex+REx))[q]
=|\langle Ex,c_q\rangle|^2
=|\langle x,E^*c_q\rangle|^2.
\end{equation*}
Thus, coupling the DFT with Theorem~\ref{thm.CirAut recovers x} allows us to recover the signal $x$ from $4M-2$ intensity measurements, namely with the ensemble $\{c_q\}_{q=0}^{2M-2}\cup\{E^*c_q\}_{q=0}^{2M-2}$.
Note that since $x\in\ell(\mathbb{Z}_{4M-3})$ is actually a zero-padded version of $x\in\mathbb{C}^M$, we may view $c_q$ and $E^*c_q$ as members of $\mathbb{C}^M$ by discarding the entries indexed by $p=M,\ldots,4M-4$.

Considering this section promised phase retrieval from only $4M-4$ intensity measurements, we must somehow find a way to discard two of these $4M-2$ measurement vectors.
To do this, first note that
\begin{align*}
\operatorname{CirAut}(Ex+REx)[0]
&=\|Ex+REx\|^2\\
&=\sum_{k\in\mathbb{Z}_{4M-3}}\left|e^{2\pi ik/(2M-1)}x[k]+e^{2\pi i(-k)/(2M-1)}x[-k]\right|^2\\
&=\sum_{k=-(2M-2)}^{-1}\left|e^{2\pi i(-k)/(2M-1)}x[-k]\right|^2+|2x[0]|^2+\sum_{k=1}^{2M-2}\left|e^{2\pi ik/(2M-1)}x[k]\right|^2\\
&=\|x+Rx\|^2\\
&=\operatorname{CirAut}(x+Rx)[0].
\end{align*}
Moreover, we have
\begin{align*}
\operatorname{CirAut}(Ex+REx)[2M-2]&=\sum_{k\in\mathbb{Z}_{4M-3}}(Ex+REx)[k]\overline{(Ex+REx)[k-(2M-2)]}\\
&=(Ex+REx)[M-1]\overline{(Ex+REx)[-(M-1)]}\\
&=(Ex+REx)[M-1]\overline{(Ex+REx)[M-1]},
\end{align*}
where the last equality is by even symmetry.
Since $x$ is only supported on $k=0,\ldots,M-1$, we then have
\begin{align*}
\operatorname{CirAut}(Ex+REx)[2M-2]
&=|(Ex+REx)[M-1]|^2\\
&=\left|e^{2\pi i(M-1)/(2M-1)}x[M-1]+e^{-2\pi i(M-1)/(2M-1)}x[-(M-1)]\right|^2\\
&=\left|e^{2\pi i(M-1)/(2M-1)}x[M-1]\right|^2
=|x[M-1]|^2
=\operatorname{CirAut}(x+Rx)[2M-2].
\end{align*}
Furthermore, the even symmetry of the circular autocorrelation also gives
\begin{align*}
\operatorname{CirAut}(Ex+REx)[-(2M-2)]&=\operatorname{CirAut}(Ex+REx)[2M-2]\\
&=\operatorname{CirAut}(x+Rx)[2M-2]
=\operatorname{CirAut}(x+Rx)[-(2M-2)].
\end{align*}
These redundancies between $\operatorname{CirAut}(x+Rx)$ and $\operatorname{CirAut}(Ex+REx)$ indicate that we might be able to remove measurement vectors from our ensemble while maintaining our ability to perform phase retrieval.
The following theorem confirms this suspicion:

\begin{theorem}
\label{thm.4M-4 intensity measurements}
Let $c_q\in\mathbb{C}^M$ be the truncated discrete cosine function defined by $c_q[p]:=2\cos(\frac{2\pi pq}{4M-3})$ for all $p=0,\ldots,M-1$, and let $E$ be the $M\times M$ diagonal modulation operator with diagonal entries $\omega_k=e^{2\pi ik/(2M-1)}$ for all $k=0,\ldots,M-1$.
Then the intensity measurement mapping $\mathcal{A}\colon\mathbb{C}^M/\mathbb{T}\rightarrow\mathbb{R}^{4M-4}$ defined by $\mathcal{A}(x):=\{|\langle x,c_q\rangle|^2\}_{q=0}^{2M-2}\cup\{|\langle x,E^*c_q\rangle|^2\}_{q=1}^{2M-3}$ is injective.
\end{theorem}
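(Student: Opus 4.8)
The plan is to show that the $4M-4$ measurements determine both circular autocorrelations $\operatorname{CirAut}(x+Rx)$ and $\operatorname{CirAut}(Ex+REx)$ in full, after which Theorem~\ref{thm.CirAut recovers x} recovers $x$ up to a global phase; since the entire reconstruction is a well-defined function of $\mathcal{A}(x)$, this forces $\mathcal{A}$ to be injective on $\mathbb{C}^M/\mathbb{T}$. The starting point is the pair of identities $(F^*\operatorname{CirAut}(x+Rx))[q]=|\langle x,c_q\rangle|^2$ and $(F^*\operatorname{CirAut}(Ex+REx))[q]=|\langle x,E^*c_q\rangle|^2$, together with the observation that each of these autocorrelations is real and even-symmetric (being the autocorrelation of an even-symmetric vector, as is readily checked), so that each of their DFTs is real and even-symmetric as well. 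Even symmetry pairs index $q$ with $4M-3-q$, so the distinct Fourier values occur at $q=0,\ldots,2M-2$.

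First I would recover $\operatorname{CirAut}(x+Rx)$. The $2M-1$ cosine measurements $\{|\langle x,c_q\rangle|^2\}_{q=0}^{2M-2}$ supply $(F^*\operatorname{CirAut}(x+Rx))[q]$ for every $q=0,\ldots,2M-2$, which by even symmetry is the complete DFT; an inverse DFT then yields $\operatorname{CirAut}(x+Rx)$ entirely, and in particular its entries at $p=0$ and $p=2M-2$. For $\operatorname{CirAut}(Ex+REx)$, writing $G[q]:=(F^*\operatorname{CirAut}(Ex+REx))[q]$, the measurements $\{|\langle x,E^*c_q\rangle|^2\}_{q=1}^{2M-3}$ give $G[q]$ for $q=1,\ldots,2M-3$ (hence by even symmetry for $q=2M,\ldots,4M-4$ as well), leaving only $G[0]$ and $G[2M-2]$ unknown. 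To pin these down I would invoke the redundancy relations derived immediately before the theorem, namely $\operatorname{CirAut}(Ex+REx)[0]=\operatorname{CirAut}(x+Rx)[0]$ and $\operatorname{CirAut}(Ex+REx)[2M-2]=\operatorname{CirAut}(x+Rx)[2M-2]$: their right-hand sides are now known from the previous step, so writing the inverse-DFT formula
\begin{equation*}
\operatorname{CirAut}(Ex+REx)[p]=\frac{1}{4M-3}\Big(G[0]+2\sum_{q=1}^{2M-2}G[q]\cos\big(\tfrac{2\pi pq}{4M-3}\big)\Big)
\end{equation*}
at $p=0$ and $p=2M-2$ produces two linear equations in the two unknowns $G[0]$ and $G[2M-2]$.

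The crux is showing that this $2\times2$ system is nonsingular. After clearing the common factor its coefficient matrix is $\left(\begin{smallmatrix}1 & 2\\ 1 & 2\cos(\frac{2\pi(2M-2)^2}{4M-3})\end{smallmatrix}\right)$, whose determinant is $2\big(\cos(\tfrac{2\pi(2M-2)^2}{4M-3})-1\big)$. This vanishes precisely when $4M-3$ divides $(2M-2)^2$, so it suffices to rule that out. I would do so by the short computation $4(2M-2)^2=(4M-4)^2=(4M-3-1)^2\equiv1\pmod{4M-3}$, whence $(2M-2)^2\equiv4^{-1}\not\equiv0\pmod{4M-3}$ (here $4$ is a unit modulo the odd number $4M-3$). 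Thus $\cos(\tfrac{2\pi(2M-2)^2}{4M-3})\neq1$, the determinant is nonzero, and the system determines $G[0]$ and $G[2M-2]$ uniquely.

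With all Fourier coefficients of $\operatorname{CirAut}(Ex+REx)$ in hand, a final inverse DFT recovers $\operatorname{CirAut}(Ex+REx)$ completely, and feeding both reconstructed autocorrelations into Theorem~\ref{thm.CirAut recovers x} returns $x$ up to a global phase factor. Because every step above is a deterministic function of $\mathcal{A}(x)$, any two signals with identical measurements yield the same class in $\mathbb{C}^M/\mathbb{T}$, which is injectivity. The main obstacle is the nonsingularity of the $2\times2$ system in the third step; the rest is bookkeeping with the DFT and even symmetry. One routine point to verify is that truncating $c_q$ and $E^*c_q$ to $\mathbb{C}^M$ does not alter the inner products, which holds because $x$ is supported on its first $M$ coordinates.
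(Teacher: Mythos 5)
Your proof is correct and follows the paper's overall architecture: recover $\operatorname{CirAut}(x+Rx)$ from the $2M-1$ cosine measurements by even extension and inverse DFT, use the redundancy identities at indices $0$ and $\pm(2M-2)$ to supply what the modulated measurements are missing, and hand both autocorrelations to Theorem~\ref{thm.CirAut recovers x}. Where you genuinely diverge is in the crux step. The paper treats the three missing Fourier coefficients $G[0]$, $G[2M-2]$, $G[-(2M-2)]$ as independent unknowns and builds a $(4M-3)\times(4M-3)$ matrix $A$ (the identity with those three rows replaced by inverse-DFT rows), whose determinant reduces to a $3\times3$ minor equal to $4i(\cos\theta-1)\sin\theta$ with $\theta:=2\pi(2M-2)^2/(4M-3)$; invertibility then requires $\theta\notin\pi\mathbb{Z}$, verified by computing $\theta/\pi=2M-\tfrac{5}{2}+\tfrac{1}{2(4M-3)}$. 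You instead invoke the even symmetry $G[-q]=G[q]$ up front, collapsing to two unknowns and a $2\times2$ system with determinant $2\big(\cos\theta-1\big)$, so you only need $\theta\notin2\pi\mathbb{Z}$ --- a strictly weaker condition --- which you verify by the tidy congruence $(4M-4)^2\equiv1\pmod{4M-3}$, whence $(2M-2)^2\equiv4^{-1}\not\equiv0\pmod{4M-3}$. Both verifications succeed for all $M\geq2$; your route buys a smaller system, a weaker nonvanishing requirement, and a more conceptual number-theoretic check, at the cost of having to justify (as you do) that the autocorrelations and their DFTs are real and even so the symmetry reduction is legitimate.
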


\begin{proof}
Since Theorem~\ref{thm.CirAut recovers x} allows us to reconstruct any $x\in\mathbb{C}^M$ up to a global phase factor from the entries of $\operatorname{CirAut}(x+Rx)$ and $\operatorname{CirAut}(Ex+REx)$, it suffices to show that the intensity measurements $\{|\langle x,c_q\rangle|^2\}_{q=0}^{2M-2}\cup\{|\langle x,E^*c_q\rangle|^2\}_{q=1}^{2M-3}$ allow us to recover the entries of these circular autocorrelations.
To this end, recall that
\begin{align*}
\operatorname{CirAut}(x+Rx)=(F^*)^{-1}\{|\langle x,c_q\rangle|^2\}_{q\in\mathbb{Z}_{4M-3}},
\quad
\operatorname{CirAut}(Ex+REx)=(F^*)^{-1}\{|\langle x,E^*c_q\rangle|^2\}_{q\in\mathbb{Z}_{4M-3}}.
\end{align*}
Since we have $\{|\langle x,c_q\rangle|^2\}_{q=0}^{2M-2}$, we can exploit even symmetry to determine the rest of $\{|\langle x,c_q\rangle|^2\}_{q\in\mathbb{Z}_{4M-3}}$, and then apply the inverse DFT to get $\operatorname{CirAut}(x+Rx)$.
Moreover, by the previous discussion, we also obtain the $0$, $2M-2$, and $-(2M-2)$ entries of $\operatorname{CirAut}(Ex+REx)$ from the corresponding entries of $\operatorname{CirAut}(x+Rx)$.
Organize this information about $\operatorname{CirAut}(Ex+REx)$ into a vector $w\in\ell(\mathbb{Z}_{4M-3})$ whose $0$, $2M-2$, and $-(2M-2)$ entries come from $\operatorname{CirAut}(Ex+REx)$ and whose remaining entries are populated by even symmetry from $\{|\langle x,E^*c_q\rangle|^2\}_{q=1}^{2M-3}$.
We can express $w$ as a matrix-vector product $w=A\{|\langle x,E^*c_q\rangle|^2\}_{q\in\mathbb{Z}_{4M-3}}$, where $A$ is the identity matrix with the $0$, $2M-2$, and $-(2M-2)$ rows replaced by the corresponding rows of the inverse DFT matrix.
To complete the proof, it suffices to show that the matrix $A$ is invertible, since this would imply $\operatorname{CirAut}(Ex+REx)=(F^*)^{-1}A^{-1}w$.

Using the cofactor expansion, note that $\det(A)$ reduces to a determinant of a 3$\times$3 submatrix of $(F^*)^{-1}$.
Specifically, letting $\theta:=2\pi(2M-2)^2/(4M-3)$ we have
\begin{align*}
\det(A)
=\det\left(\left[\begin{array}{ccc}
1&1&1\\
1&e^{i\theta}&e^{-i\theta}\\
1&e^{-i\theta}&e^{i\theta}
\end{array}\right]\right)
&=(e^{2i\theta}-e^{-2i\theta})-(e^{i\theta}-e^{-i\theta})+(e^{-i\theta}-e^{i\theta})\\
&=(e^{i\theta}+e^{-i\theta}-2)(e^{i\theta}-e^{-i\theta})
=4i(\cos(\theta)-1)\sin(\theta),
\end{align*}
and so $A$ is invertible if and only if $\cos(\theta)-1\neq0$ and $\sin(\theta)\neq0$.
This equivalent to having $\pi$ not divide $\theta$, and indeed, the ratio
\begin{align*}
\frac{\theta}{\pi}
=\frac{2(2M-2)^2}{4M-3}
=2M-\frac{5}{2}+\frac{1}{2(4M-3)}
\end{align*}
is not an integer because $M\geq2$.
As such, $A$ is invertible.
\end{proof}

We conclude this section by summarizing our measurement design and phase retrieval procedure:\\

\noindent
\textbf{Measurement design}
\begin{itemize}
\item
Define the $q$th truncated discrete cosine function $c_q:=\{2\cos(\frac{2\pi pq}{4M-3})\}_{p=0}^{M-1}$
\item
Define the $M\times M$ diagonal matrix $E$ with entries $\omega_k:=e^{2\pi ik/(2M-1)}$ for all $k=0,\ldots,M-1$
\item
Take $\Phi:=\{c_q\}_{q=0}^{2M-2}\cup\{E^*c_q\}_{q=1}^{2M-3}$
\end{itemize}

\noindent
\textbf{Phase retrieval procedure}
\begin{itemize}
\item
Calculate $\{|\langle x,c_q\rangle|^2\}_{q\in\mathbb{Z}_{4M-3}}$ from $\{|\langle x,c_q\rangle|^2\}_{q=0}^{2M-2}$ by even extension
\item
Calculate $\operatorname{CirAut}(x+Rx)=(F^*)^{-1}\{|\langle x,c_q\rangle|^2\}_{q\in\mathbb{Z}_{4M-3}}$
\item
Define $w\in\ell(\mathbb{Z}_{4M-3})$ so that its $0$, $2M-2$, and $-(2M-2)$ entries are the corresponding entries in $\operatorname{CirAut}(x+Rx)$ and its remaining entries are populated by even symmetry from $\{|\langle x,E^*c_q\rangle|^2\}_{q=1}^{2M-3}$
\item
Define $A$ to be the identity matrix with the $0$, $2M-2$, and $-(2M-2)$ rows replaced by the corresponding rows of the inverse DFT matrix $(F^*)^{-1}$
\item
Calculate $\operatorname{CirAut}(Ex+REx)=(F^*)^{-1}A^{-1}w$
\item
Recover $x$ up to global phase from $\operatorname{CirAut}(x+Rx)$ and $\operatorname{CirAut}(Ex+REx)$ using the process described in the proof of Theorem~\ref{thm.CirAut recovers x}
\end{itemize}

\section{Almost injectivity}

While $4M+o(M)$ measurements are necessary and generically sufficient for injectivity in the complex case, you can save a factor of $2$ in the number of measurements if you are willing to slightly weaken the desired notion of injectivity~\cite{BalanCE:06,FlammiaSC:05}.
To be explicit, we start with the following definition:

\begin{definition}
\label{def.almost injective}
Consider $\Phi=\{\varphi_n\}_{n=1}^N\subseteq\mathbb{R}^M$.
The intensity measurement mapping $\mathcal{A}\colon\mathbb{R}^M/\{\pm1\}\rightarrow\mathbb{R}^N$ defined by $(\mathcal{A}(x))(n):=|\langle x,\varphi_n\rangle|^2$ is said to be \textit{almost injective} if $\mathcal{A}^{-1}(\mathcal{A}(x))=\{\pm x\}$ for almost every $x\in\mathbb{R}^M$.
\end{definition}

The above definition specifically treats the real case, but it can be similarly defined for the complex case in the obvious way.
For the complex case, it is known that $2M$ measurements are necessary for almost injectivity~\cite{FlammiaSC:05}, and that $2M$ generic measurements suffice~\cite{BalanCE:06}; this is the factor-of-$2$ savings mentioned above.
For the real case, it is also known how many measurements are necessary and generically sufficient for almost injectivity: $M+1$~~\cite{BalanCE:06}.
Like the complex case, this is also a factor-of-$2$ savings from the injectivity requirement: $2M-1$.
This requirement for injectivity in the real case follows from the following result from~~\cite{BalanCE:06}, which we prove here because the proof is short and inspires the remainder of this section:

\begin{theorem}
\label{thm.complement property characterization}
Consider $\Phi=\{\varphi_n\}_{n=1}^N\subseteq\mathbb{R}^M$ and the intensity measurement mapping $\mathcal{A}\colon\mathbb{R}^M/\{\pm1\}\rightarrow\mathbb{R}^N$ defined by $(\mathcal{A}(x))(n):=|\langle x,\varphi_n\rangle|^2$. 
Then $\mathcal{A}$ is injective if and only if for every $S\subseteq\{1,\ldots,N\}$, either $\{\varphi_n\}_{n\in S}$ or $\{\varphi_n\}_{n\in S^\mathrm{c}}$ spans $\mathbb{R}^M$.
\end{theorem}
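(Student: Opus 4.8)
The plan is to prove both implications by contraposition, using the algebraic identity that converts the quadratic equality of intensities into a bilinear vanishing condition. For any two signals $x,y\in\mathbb{R}^M$ and any measurement vector $\varphi_n$, I would write
\[
\langle x,\varphi_n\rangle^2-\langle y,\varphi_n\rangle^2=\langle x+y,\varphi_n\rangle\langle x-y,\varphi_n\rangle.
\]
Setting $u:=x+y$ and $v:=x-y$, the equality $\mathcal{A}(x)=\mathcal{A}(y)$ (that is, $\langle x,\varphi_n\rangle^2=\langle y,\varphi_n\rangle^2$ for every $n$) becomes equivalent to $\langle u,\varphi_n\rangle\langle v,\varphi_n\rangle=0$ for every $n$, while the ambiguity $x=\pm y$ corresponds exactly to $u=0$ or $v=0$. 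This substitution is the crux of the argument: it turns the nonlinear injectivity question into a statement about the orthogonality of a pair of nonzero vectors to complementary index subsets.

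For the direction \emph{complement property $\Rightarrow$ injective}, I would argue the contrapositive. If $\mathcal{A}$ is not injective, there exist $x\neq\pm y$ with $\mathcal{A}(x)=\mathcal{A}(y)$; then $u,v\neq0$ and $\langle u,\varphi_n\rangle\langle v,\varphi_n\rangle=0$ for all $n$. Define $S:=\{n:\langle u,\varphi_n\rangle=0\}$. Since at least one factor vanishes for each $n$, we have $\langle v,\varphi_n\rangle=0$ for every $n\in S^{\mathrm c}$. Thus $u$ is a nonzero vector orthogonal to $\{\varphi_n\}_{n\in S}$ and $v$ is a nonzero vector orthogonal to $\{\varphi_n\}_{n\in S^{\mathrm c}}$, so neither family spans $\mathbb{R}^M$, and the complement property fails.

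For the converse \emph{injective $\Rightarrow$ complement property}, again by contraposition, suppose some $S$ has neither $\{\varphi_n\}_{n\in S}$ nor $\{\varphi_n\}_{n\in S^{\mathrm c}}$ spanning $\mathbb{R}^M$. Then I would pick a nonzero $u$ in the orthogonal complement of $\operatorname{span}\{\varphi_n\}_{n\in S}$ and a nonzero $v$ in the orthogonal complement of $\operatorname{span}\{\varphi_n\}_{n\in S^{\mathrm c}}$, and set $x:=(u+v)/2$ and $y:=(u-v)/2$. A short check shows $\langle x,\varphi_n\rangle^2=\langle y,\varphi_n\rangle^2$ on both $S$ and $S^{\mathrm c}$, while $x-y=v\neq0$ and $x+y=u\neq0$ give $x\neq\pm y$; hence $\mathcal{A}$ is not injective.

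The argument is essentially a clean piece of linear algebra once the substitution is in place, so I do not anticipate a serious obstacle. The only point requiring care is the bookkeeping in the forward direction when $\langle u,\varphi_n\rangle$ and $\langle v,\varphi_n\rangle$ vanish simultaneously; assigning such indices to $S$ (via the definition $S:=\{n:\langle u,\varphi_n\rangle=0\}$) keeps the partition well-defined and guarantees the containment $S^{\mathrm c}\subseteq\{n:\langle v,\varphi_n\rangle=0\}$ that the conclusion requires.
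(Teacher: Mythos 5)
Your proposal is correct and follows essentially the same route as the paper: both directions proceed by contraposition via the substitution $u=x+y$, $v=x-y$, and your set $S=\{n:\langle u,\varphi_n\rangle=0\}$ coincides with the paper's $S=\{n:\langle x,\varphi_n\rangle=-\langle y,\varphi_n\rangle\}$. The only cosmetic difference is that you package the key cancellation as the factorization $\langle x,\varphi_n\rangle^2-\langle y,\varphi_n\rangle^2=\langle x+y,\varphi_n\rangle\langle x-y,\varphi_n\rangle$, whereas the paper expands $|\langle u\pm v,\varphi_n\rangle|^2$ and notes the cross term vanishes.
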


\begin{proof}
We will prove both directions by obtaining the contrapositives.

($\Rightarrow$) 
Assume there exists $S\subseteq\{1,\ldots,N\}$ such that neither $\{\varphi_n\}_{n\in S}$ nor $\{\varphi_n\}_{n\in S^\mathrm{c}}$ spans $\mathbb{R}^M$. 
This implies that there are nonzero vectors $u,v\in\mathbb{R}^M$ such that $\langle u,\varphi_n\rangle=0$ for all $n\in S$ and $\langle v,\varphi_n\rangle=0$ for all $n\in S^\mathrm{c}$.
For each $n$, we then have
\begin{equation*}
|\langle u\pm v,\varphi_n\rangle|^2
=|\langle u,\varphi_n\rangle|^2\pm2\operatorname{Re}\langle u,\varphi_n\rangle\overline{\langle v,\varphi_n\rangle}+|\langle v,\varphi_n\rangle|^2
=|\langle u,\varphi_n\rangle|^2+|\langle v,\varphi_n\rangle|^2.
\end{equation*}
Since $|\langle u+v,\varphi_n\rangle|^2=|\langle u-v,\varphi_n\rangle|^2$ for every $n$, we have $\mathcal{A}(u+v)=\mathcal{A}(u-v)$. 
Moreover, $u$ and $v$ are nonzero by assumption, and so $u+v\neq\pm(u-v)$.

($\Leftarrow$) 
Assume that $\mathcal{A}$ is not injective. 
Then there exist vectors $x,y\in\mathbb{R}^M$ such that $x\neq\pm y$ and $\mathcal{A}(x)=\mathcal{A}(y)$. 
Taking $S:=\{n:\langle x,\varphi_n\rangle=-\langle y,\varphi_n\rangle\}$, we have $\langle x+y,\varphi_n\rangle=0$ for every $n\in S$.
Otherwise when $n\in S^\mathrm{c}$, we have $\langle x,\varphi_n\rangle=\langle y,\varphi_n\rangle$ and so $\langle x-y,\varphi_n\rangle=0$.
Furthermore, both $x+y$ and $x-y$ are nontrivial since $x\neq\pm y$, and so neither $\{\varphi_n\}_{n\in S}$ nor $\{\varphi_n\}_{n\in S^\mathrm{c}}$ spans $\mathbb{R}^M$.
\end{proof}

Similar to the above result, in this section, we characterize ensembles of measurement vectors which yield almost injective intensity measurements, and similar to the above proof, the basic idea behind our analysis is to consider sums and differences of signals with identical intensity measurements.
Our characterization starts with the following lemma:

\begin{lemma}
\label{lem.almost injective and Minkowski sum}
Consider $\Phi=\{\varphi_n\}_{n=1}^N\subseteq\mathbb{R}^M$ and the intensity measurement mapping $\mathcal{A}\colon\mathbb{R}^M/\{\pm1\}\rightarrow\mathbb{R}^N$ defined by $(\mathcal{A}(x))(n):=|\langle x,\varphi_n\rangle|^2$. 
Then $\mathcal{A}$ is almost injective if and only if almost every $x\in\mathbb{R}^M$ is not in the Minkowski sum
$\operatorname{span}(\Phi_{S})^\perp\setminus\{0\}+\operatorname{span}(\Phi_{S^\mathrm{c}})^\perp\setminus\{0\}$
for all $S\subseteq\{1,\ldots,N\}$. 
More precisely, $\mathcal{A}^{-1}(\mathcal{A}(x))=\{\pm x\}$ if and only if $x\notin\operatorname{span}(\Phi_{S})^\perp\setminus\{0\}+\operatorname{span}(\Phi_{S^\mathrm{c}})^\perp\setminus\{0\}$ for any $S\subseteq\{1,\ldots,N\}$.
\end{lemma}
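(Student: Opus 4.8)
The plan is to prove the pointwise statement (the ``more precisely'' claim) first, since the almost-injectivity equivalence then follows from a one-line measure-theoretic observation. Following the strategy of Theorem~\ref{thm.complement property characterization}, I would establish the pointwise equivalence by proving both directions through their contrapositives, trading the signal pair $\{x,y\}$ for the pair $u:=\frac{1}{2}(x+y)$, $v:=\frac{1}{2}(x-y)$, which satisfy $x=u+v$.

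For the first direction, I would assume $\mathcal{A}^{-1}(\mathcal{A}(x))\neq\{\pm x\}$, so there is some $y$ with $y\neq\pm x$ and $\mathcal{A}(y)=\mathcal{A}(x)$. Equality of intensities forces $\langle x,\varphi_n\rangle=\pm\langle y,\varphi_n\rangle$ for each $n$, so I set $S:=\{n:\langle x,\varphi_n\rangle=-\langle y,\varphi_n\rangle\}$ exactly as in the theorem. Writing $u=\frac{1}{2}(x+y)$ and $v=\frac{1}{2}(x-y)$, a direct check gives $\langle u,\varphi_n\rangle=0$ for $n\in S$ and $\langle v,\varphi_n\rangle=0$ for $n\in S^\mathrm{c}$, so $u\in\operatorname{span}(\Phi_S)^\perp$ and $v\in\operatorname{span}(\Phi_{S^\mathrm{c}})^\perp$. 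The hypothesis $y\neq\pm x$ is precisely the pair of conditions $u\neq0$ and $v\neq0$ (since $u=0\Leftrightarrow y=-x$ and $v=0\Leftrightarrow y=x$), whence $x=u+v$ lies in the Minkowski sum for this $S$.

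For the converse direction, I would assume $x\in\operatorname{span}(\Phi_S)^\perp\setminus\{0\}+\operatorname{span}(\Phi_{S^\mathrm{c}})^\perp\setminus\{0\}$ for some $S$, say $x=u+v$ with $u,v$ the respective nonzero components, and set $y:=u-v$. Splitting into the cases $n\in S$ and $n\in S^\mathrm{c}$ and using that $u$ annihilates $\Phi_S$ while $v$ annihilates $\Phi_{S^\mathrm{c}}$, I get $\langle y,\varphi_n\rangle=-\langle x,\varphi_n\rangle$ for $n\in S$ and $\langle y,\varphi_n\rangle=\langle x,\varphi_n\rangle$ for $n\in S^\mathrm{c}$, so in either case $|\langle y,\varphi_n\rangle|=|\langle x,\varphi_n\rangle|$ and hence $\mathcal{A}(y)=\mathcal{A}(x)$; moreover $u,v\neq0$ again forces $y\neq\pm x$, so $\mathcal{A}^{-1}(\mathcal{A}(x))\neq\{\pm x\}$. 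This completes the pointwise equivalence.

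Finally, for the almost-injectivity statement, I would observe that $\mathcal{A}$ is almost injective exactly when $\mathcal{A}^{-1}(\mathcal{A}(x))=\{\pm x\}$ for almost every $x$, which by the pointwise equivalence just proved is exactly the condition that almost every $x$ avoids $\bigcup_{S\subseteq\{1,\ldots,N\}}(\operatorname{span}(\Phi_S)^\perp\setminus\{0\}+\operatorname{span}(\Phi_{S^\mathrm{c}})^\perp\setminus\{0\})$, a finite union over the $2^N$ subsets $S$. I do not anticipate a genuine obstacle, since the argument is a symmetric bookkeeping exercise mirroring Theorem~\ref{thm.complement property characterization}; the only points requiring care are matching the two nonzero constraints $u\neq0$ and $v\neq0$ against the single condition $y\neq\pm x$, and verifying the intensity equality via the $S$ versus $S^\mathrm{c}$ case split.
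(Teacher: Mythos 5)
Your proposal is correct and follows essentially the same route as the paper: both reduce $\mathcal{A}(x)=\mathcal{A}(y)$ to a sign-pattern set $S$, pass to the pair $x+y$, $x-y$ (your $2u$, $2v$) lying in $\operatorname{span}(\Phi_S)^\perp$ and $\operatorname{span}(\Phi_{S^\mathrm{c}})^\perp$, match $y\neq\pm x$ with $u,v\neq 0$, and then deduce the almost-injectivity statement from the pointwise one. No gaps.
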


\begin{proof}
By the definition of the mapping $\mathcal{A}$, for $x,y\in\mathbb{R}^M$ we have $\mathcal{A}(x)=\mathcal{A}(y)$ if and only if $|\langle x,\varphi_n\rangle|=|\langle y,\varphi_n\rangle|$ for all $n\in\{1,\ldots,N\}$.
This occurs precisely when there is a subset $S\subseteq\{1,\ldots,N\}$ such that $\langle x,\varphi_n\rangle=-\langle y,\varphi_n\rangle$ for every $n\in S$ and $\langle x,\varphi_n\rangle=\langle y,\varphi_n\rangle$ for every $n\in S^\mathrm{c}$.
Thus, $\mathcal{A}^{-1}(\mathcal{A}(x))=\{\pm x\}$ if and only if for every $y\neq\pm x$ and for every $S\subseteq\{1,\ldots,N\}$, either there exists an $n\in S$ such that $\langle x+y,\varphi_n\rangle\neq0$ or an $n\in S^\mathrm{c}$ such that $\langle x-y,\varphi_n\rangle\neq0$.
We claim that this occurs if and only if $x$ is not in the Minkowski sum
$\operatorname{span}(\Phi_{S})^\perp\setminus\{0\}+\operatorname{span}(\Phi_{S^\mathrm{c}})^\perp\setminus\{0\}$
for all $S\subseteq\{1,\ldots,N\}$, which would complete the proof.
We verify the claim by seeking the contrapositive in each direction.

$(\Rightarrow)$ Suppose
$x\in\operatorname{span}(\Phi_{S})^\perp\setminus\{0\}+\operatorname{span}(\Phi_{S^\mathrm{c}})^\perp\setminus\{0\}$.
Then there exists $u\in\operatorname{span}(\Phi_{S})^\perp\setminus\{0\}$ and $v\in\operatorname{span}(\Phi_{S^\mathrm{c}})^\perp\setminus\{0\}$ such that $x=u+v$.
Taking $y:=u-v$, we see that
$x+y=2u\in\operatorname{span}(\Phi_{S})^\perp\setminus\{0\}$
and
$x-y=2v\in\operatorname{span}(\Phi_{S^\mathrm{c}})^\perp\setminus\{0\}$,
which means that for every $S\subseteq\{1,\ldots,N\}$ there is no $n\in S$ such that $\langle x+y,\varphi_n\rangle\neq0$ nor $n\in S^\mathrm{c}$ such that $\langle x-y,\varphi_n\rangle\neq0$.
Furthermore, $u$ and $v$ are nonzero, and so $y\neq\pm x$.

$(\Leftarrow)$ Suppose $y\neq\pm x$ and for every $S\subseteq\{1,\ldots,N\}$ there is no $n\in S$ such that $\langle x+y,\varphi_n\rangle\neq0$ nor $n\in S^\mathrm{c}$ such that $\langle x-y,\varphi_n\rangle\neq0$.
Then $x+y\in\operatorname{span}(\Phi_{S})^\perp\setminus\{0\}$ and $x-y\in\operatorname{span}(\Phi_{S^\mathrm{c}})^\perp\setminus\{0\}$.
Since
$x=\frac{1}{2}(x+y)+\frac{1}{2}(x-y)$,
we have that $x\in\operatorname{span}(\Phi_{S})^\perp\setminus\{0\}+\operatorname{span}(\Phi_{S^\mathrm{c}})^\perp\setminus\{0\}$.
\end{proof}

\begin{theorem}
\label{thm.almost injective and Minkowski sum proper subspace}
Consider $\Phi=\{\varphi_n\}_{n=1}^N\subseteq\mathbb{R}^M$ and the intensity measurement mapping $\mathcal{A}\colon\mathbb{R}^M/\{\pm1\}\rightarrow\mathbb{R}^N$ defined by $(\mathcal{A}(x))(n):=|\langle x,\varphi_n\rangle|^2$. 
Suppose $\Phi$ spans $\mathbb{R}^M$ and each $\varphi_n$ is nonzero.
Then $\mathcal{A}$ is almost injective if and only if the Minkowski sum
$\operatorname{span}(\Phi_{S})^\perp+\operatorname{span}(\Phi_{S^\mathrm{c}})^\perp$
is a proper subspace of $\mathbb{R}^M$ for each nonempty proper subset $S\subseteq\{1,\ldots,N\}$.
\end{theorem}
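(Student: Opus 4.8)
The plan is to derive this characterization directly from Lemma~\ref{lem.almost injective and Minkowski sum}, which already recasts almost injectivity in terms of the punctured Minkowski sums $P_S := (\operatorname{span}(\Phi_S)^\perp \setminus \{0\}) + (\operatorname{span}(\Phi_{S^\mathrm{c}})^\perp \setminus \{0\})$. That lemma asserts that $\mathcal{A}$ is almost injective precisely when almost every $x$ avoids $\bigcup_S P_S$; since this is a finite union over the subsets $S \subseteq \{1,\ldots,N\}$, the condition is equivalent to each individual $P_S$ having Lebesgue measure zero. Thus the whole theorem reduces to the claim that, under the stated hypotheses, $P_S$ is a null set if and only if the honest subspace sum $W_S := \operatorname{span}(\Phi_S)^\perp + \operatorname{span}(\Phi_{S^\mathrm{c}})^\perp$ is a proper subspace, together with the observation that the extreme subsets $S = \emptyset$ and $S = \{1,\ldots,N\}$ may be discarded.

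First I would dispatch the extreme cases. Because $\Phi$ spans $\mathbb{R}^M$, taking $S = \emptyset$ forces $\operatorname{span}(\Phi_{S^\mathrm{c}})^\perp = \operatorname{span}(\Phi)^\perp = \{0\}$, so one punctured factor is empty and $P_\emptyset = \emptyset$; the case $S = \{1,\ldots,N\}$ is symmetric. Hence only nonempty proper $S$ can contribute, which is exactly the index range appearing in the statement.

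The geometric heart is the following dichotomy relating $P_S$ to $W_S$. Writing $U := \operatorname{span}(\Phi_S)^\perp$ and $V := \operatorname{span}(\Phi_{S^\mathrm{c}})^\perp$, the easy direction is that $P_S \subseteq W_S$, so whenever $W_S$ is proper it is a null set and $P_S$ is too. For the converse I would assume $W_S = \mathbb{R}^M$ and show $P_S$ has positive measure, splitting on whether $U \cap V$ is trivial. If $U \cap V \neq \{0\}$, pick $0 \neq w \in U \cap V$; given any $x = u + v$, the one-parameter family $x = (u + tw) + (v - tw)$ keeps $u + tw \in U$ and $v - tw \in V$ while making both summands nonzero for all but at most two values of $t$, so in fact $P_S = \mathbb{R}^M$. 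If instead $U \cap V = \{0\}$, then $W_S = \mathbb{R}^M$ gives a direct sum, under which the unique decomposition shows $P_S = \mathbb{R}^M \setminus (U \cup V)$; here the hypotheses enter decisively, since $S$ and $S^\mathrm{c}$ are both nonempty and every $\varphi_n \neq 0$ forces $\operatorname{span}(\Phi_S)$ and $\operatorname{span}(\Phi_{S^\mathrm{c}})$ to be nonzero, so $U$ and $V$ are proper, $U \cup V$ is null, and $P_S$ has full measure.

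Assembling these pieces, $P_S$ is null iff $W_S$ is proper for each nonempty proper $S$, while the extreme $S$ contribute nothing, so chaining back through Lemma~\ref{lem.almost injective and Minkowski sum} yields the theorem. I expect the main obstacle to be the careful bookkeeping in the $U \cap V \neq \{0\}$ branch: one must verify that every $x$, including $x = 0$, admits a decomposition into two nonzero pieces (for $x = 0$ this is exactly where the nontrivial intersection is used), so that $P_S$ genuinely fills all of $\mathbb{R}^M$ rather than merely a full-measure subset. The measure-theoretic steps—a finite union of null sets is null, and a proper subspace or a finite union of proper subspaces is null—are routine.
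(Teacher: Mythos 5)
Your proposal is correct and follows essentially the same route as the paper: both pass through Lemma~\ref{lem.almost injective and Minkowski sum} and then observe that the punctured Minkowski sum differs from the honest subspace sum only by the null subspaces $\operatorname{span}(\Phi_S)^\perp$ and $\operatorname{span}(\Phi_{S^\mathrm{c}})^\perp$ (the paper proves the exact identity $P_S=W_S\setminus(U\cup V)$ and compares measures, while you argue the dichotomy on whether $W_S$ is proper directly). Your extra branch for $U\cap V\neq\{0\}$ is vacuous here, since the spanning hypothesis already forces $\operatorname{span}(\Phi_S)^\perp\cap\operatorname{span}(\Phi_{S^\mathrm{c}})^\perp=\{0\}$ for every $S$, but it does no harm.
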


Note that the above result is not terribly surprising considering Lemma~\ref{lem.almost injective and Minkowski sum}, as the new condition involves a simpler Minkowski sum in exchange for additional (reasonable and testable) assumptions on $\Phi$.
The proof of this theorem amounts to measuring the difference between the two Minkowski sums:

\begin{proof}[Proof of Theorem~\ref{thm.almost injective and Minkowski sum proper subspace}]
We start with the following claim:
\begin{equation}
\label{eq.Minkowski sum equality}
\operatorname{span}(\Phi_{S})^\perp\setminus\{0\}+\operatorname{span}(\Phi_{S^\mathrm{c}})^\perp\setminus\{0\}
=\left(\operatorname{span}(\Phi_{S})^\perp+\operatorname{span}(\Phi_{S^\mathrm{c}})^\perp\right)
\setminus\left(\operatorname{span}(\Phi_{S})^\perp\cup\operatorname{span}(\Phi_{S^\mathrm{c}})^\perp\right).
\end{equation}
Before verifying this claim, let's first use it to prove the theorem.
From Lemma \ref{lem.almost injective and Minkowski sum} we know that $\mathcal{A}$ is almost injective if and only if almost every $x\in\mathbb{R}^M$ is not in the Minkowski sum
$\operatorname{span}(\Phi_{S})^\perp\setminus\{0\}+\operatorname{span}(\Phi_{S^\mathrm{c}})^\perp\setminus\{0\}$
for any $S\subseteq\{1,\ldots,N\}$.
In other words, the Lebesgue measure of this Minkowski sum is zero for each $S\subseteq\{1,\ldots,N\}$.
By \eqref{eq.Minkowski sum equality}, this equivalently means that the Lebesgue measure of
$\left(\operatorname{span}(\Phi_{S})^\perp+\operatorname{span}(\Phi_{S^\mathrm{c}})^\perp\right)
\setminus\left(\operatorname{span}(\Phi_{S})^\perp\cup\operatorname{span}(\Phi_{S^\mathrm{c}})^\perp\right)$
is zero for each $S\subseteq\{1,\ldots,N\}$.
Since $\Phi$ spans $\mathbb{R}^M$, this set is empty (and therefore has Lebesgue measure zero) when $S=\emptyset$ or $S=\{1,\ldots,N\}$.
Also, since each $\varphi_n$ is nonzero, we know that $\operatorname{span}(\Phi_{S})^\perp$ and $\operatorname{span}(\Phi_{S^\mathrm{c}})^\perp$ are proper subspaces of $\mathbb{R}^M$ whenever $S$ is a nonempty proper subset of $\{1,\ldots,N\}$, and so in these cases both subspaces must have Lebesgue measure zero.
As such, we have that for every nonempty proper subset $S\subseteq\{1,\ldots,N\}$,
\begin{align*}
&\operatorname{Leb}\left[\left(\operatorname{span}(\Phi_{S})^\perp+\operatorname{span}(\Phi_{S^\mathrm{c}})^\perp\right)
\setminus\left(\operatorname{span}(\Phi_{S})^\perp\cup\operatorname{span}(\Phi_{S^\mathrm{c}})^\perp\right)\right]\\
&\qquad\qquad\geq\operatorname{Leb}\left(\operatorname{span}(\Phi_{S})^\perp+\operatorname{span}(\Phi_{S^\mathrm{c}})^\perp\right)
-\operatorname{Leb}\left(\operatorname{span}(\Phi_{S})^\perp\right)-\operatorname{Leb}\left(\operatorname{span}(\Phi_{S^\mathrm{c}})^\perp\right)\\
&\qquad\qquad=\operatorname{Leb}\left(\operatorname{span}(\Phi_{S})^\perp+\operatorname{span}(\Phi_{S^\mathrm{c}})^\perp\right)\\
&\qquad\qquad\geq\operatorname{Leb}\left[\left(\operatorname{span}(\Phi_{S})^\perp+\operatorname{span}(\Phi_{S^\mathrm{c}})^\perp\right)\setminus\left(\operatorname{span}(\Phi_{S})^\perp\cup\operatorname{span}(\Phi_{S^\mathrm{c}})^\perp\right)\right].
\end{align*}
In summary, $\left(\operatorname{span}(\Phi_{S})^\perp+\operatorname{span}(\Phi_{S^\mathrm{c}})^\perp\right)
\setminus\left(\operatorname{span}(\Phi_{S})^\perp\cup\operatorname{span}(\Phi_{S^\mathrm{c}})^\perp\right)$
having Lebesgue measure zero for each $S\subseteq\{1,\ldots,N\}$ is equivalent to $\operatorname{span}(\Phi_{S})^\perp+\operatorname{span}(\Phi_{S^\mathrm{c}})^\perp$ having Lebesgue measure zero for each nonempty proper subset $S\subseteq\{1,\ldots,N\}$, which in turn is equivalent to the Minkowski sum $\operatorname{span}(\Phi_{S})^\perp+\operatorname{span}(\Phi_{S^\mathrm{c}})^\perp$ being a proper subspace of $\mathbb{R}^M$ for each nonempty proper subset $S\subseteq\{1,\ldots,N\}$, as desired.

Thus, to complete the proof we must verify the claim~\eqref{eq.Minkowski sum equality}.
We will do so by verifying both inclusions.
Clearly
$\operatorname{span}(\Phi_{S})^\perp\setminus\{0\}+\operatorname{span}(\Phi_{S^\mathrm{c}})^\perp\setminus\{0\}$
is a subset of
$\operatorname{span}(\Phi_{S})^\perp+\operatorname{span}(\Phi_{S^\mathrm{c}})^\perp$,
so to prove $\subseteq$ in \eqref{eq.Minkowski sum equality}, it suffices to show that
\begin{equation}
\label{eq.Minkowski sum empty intersection}
\left(\operatorname{span}(\Phi_{S})^\perp\setminus\{0\}+\operatorname{span}(\Phi_{S^\mathrm{c}})^\perp\setminus\{0\}\right)
\cap\left(\operatorname{span}(\Phi_{S})^\perp\cup\operatorname{span}(\Phi_{S^\mathrm{c}})^\perp\right)
=\emptyset.
\end{equation}
Assuming to the contrary, then without loss of generality there exist elements
$a\in\operatorname{span}(\Phi_{S})^\perp$,
$b\in\operatorname{span}(\Phi_{S})^\perp\setminus\{0\}$,
and $c\in\operatorname{span}(\Phi_{S^\mathrm{c}})^\perp\setminus\{0\}$
such that $a=b+c$.
But this means that $a-b=c\neq0$ is in both $\operatorname{span}(\Phi_{S})^\perp$ and $\operatorname{span}(\Phi_{S^\mathrm{c}})^\perp$,
contradicting the assumption that the vectors $\Phi=\{\varphi_n\}_{n=1}^N$ span $\mathbb{R}^M$.
To prove $\supseteq$ in \eqref{eq.Minkowski sum equality}, note that \eqref{eq.Minkowski sum empty intersection} tells us it is equivalent to show the containment
\begin{equation*}
\operatorname{span}(\Phi_{S})^\perp+\operatorname{span}(\Phi_{S^\mathrm{c}})^\perp
\subseteq\left(\operatorname{span}(\Phi_{S})^\perp\setminus\{0\}+\operatorname{span}(\Phi_{S^\mathrm{c}})^\perp\setminus\{0\}\right)
\cup\left(\operatorname{span}(\Phi_{S})^\perp\cup\operatorname{span}(\Phi_{S^\mathrm{c}})^\perp\right).
\end{equation*}
To this end, let
$a\in\operatorname{span}(\Phi_{S})^\perp$
and $b\in\operatorname{span}(\Phi_{S^\mathrm{c}})^\perp$
so that
$a+b\in\operatorname{span}(\Phi_{S})^\perp+\operatorname{span}(\Phi_{S^\mathrm{c}})^\perp$. 
Then the inclusion follows from observing the following cases:

\begin{itemize}

\item[(I)] 
Suppose $a$ and $b$ are nonzero.
Then $a\in\operatorname{span}(\Phi_{S})^\perp\setminus\{0\}$
and $b\in\operatorname{span}(\Phi_{S^\mathrm{c}})^\perp\setminus\{0\}$,
implying that
$a+b\in\operatorname{span}(\Phi_{S})^\perp\setminus\{0\}+\operatorname{span}(\Phi_{S^\mathrm{c}})^\perp\setminus\{0\}$.

\item[(II)]
Suppose exactly one of $a$ and $b$ are nonzero (without loss of generality that $a\neq0$ and $b=0$).
Then $a+b=a\in\operatorname{span}(\Phi_{S})^\perp$,
implying that
$a+b\in\operatorname{span}(\Phi_{S})^\perp\cup\operatorname{span}(\Phi_{S^\mathrm{c}})^\perp$.

\item[(III)]
Suppose $a$ and $b$ are both zero.
Then
$a+b\in\operatorname{span}(\Phi_{S})^\perp\cup\operatorname{span}(\Phi_{S^\mathrm{c}})^\perp$.
\end{itemize}
Having confirmed both inclusions of our initial claim~\eqref{eq.Minkowski sum equality}, the proof is complete.
\end{proof}

At this point, consider the following stronger restatement of Theorem~\ref{thm.almost injective and Minkowski sum proper subspace}: 
``Suppose each $\varphi_n$ is nonzero.
Then $\mathcal{A}$ is almost injective if and only if $\Phi$ spans $\mathbb{R}^M$ and the Minkowski sum
$\operatorname{span}(\Phi_{S})^\perp+\operatorname{span}(\Phi_{S^\mathrm{c}})^\perp$
is a proper subspace of $\mathbb{R}^M$ for each nonempty proper subset $S\subseteq\{1,\ldots,N\}$.''
Note that we can move the spanning assumption into the condition because if $\Phi$ does not span, then we can decompose almost every $x\in\mathbb{R}^M$ as $x=u+v$ such that $u\in\operatorname{span}(\Phi)$ and $v\in\operatorname{span}(\Phi)^\perp$ with $v\neq0$, and defining $y:=u-v$ then gives $\mathcal{A}(y)=\mathcal{A}(x)$ despite the fact that $y\neq\pm x$.
As for the assumption that the $\varphi_n$'s are nonzero, we note that having $\varphi_n=0$ amounts to having the $n$th entry of $\mathcal{A}(x)$ be zero for all $x$.
As such, $\Phi$ yields almost injectivity precisely when the nonzero members of $\Phi$ together yield almost injectivity.
With this identification, the stronger restatement of Theorem~\ref{thm.almost injective and Minkowski sum proper subspace} above can be viewed as a complete characterization of almost injectivity.
Next, we will replace the Minkowski sum condition with a rather elegant condition involving the ranks of $\Phi_S$ and $\Phi_{S^\mathrm{c}}$ by applying the following lemma:

\begin{lemma}[Inclusion-exclusion principle for subspaces]
\label{lem.subspace inclusion exclusion}
Let $U$ and $V$ be subspaces of a common vector space.
Then $\dim(U+V)=\dim U+\dim V-\dim(U\cap V)$.
\end{lemma}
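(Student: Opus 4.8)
The plan is to prove this classical fact by the standard basis-extension argument: build a basis of $U+V$ out of compatible bases of $U$ and $V$ that share a common basis of $U\cap V$, and then count dimensions. First I would let $\{w_1,\ldots,w_k\}$ be a basis of the intersection $U\cap V$, so that $\dim(U\cap V)=k$. Since $U\cap V$ is a subspace of both $U$ and $V$, I can extend this set to a basis $\{w_1,\ldots,w_k,u_1,\ldots,u_m\}$ of $U$ and separately to a basis $\{w_1,\ldots,w_k,v_1,\ldots,v_n\}$ of $V$, giving $\dim U=k+m$ and $\dim V=k+n$.

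Next I would claim that the combined set $B:=\{w_1,\ldots,w_k,u_1,\ldots,u_m,v_1,\ldots,v_n\}$ is a basis of $U+V$. Spanning is immediate: every element of $U+V$ has the form $u+v$ with $u\in U$ and $v\in V$, and expressing $u$ and $v$ in their respective bases writes $u+v$ as a linear combination of the elements of $B$. The real substance lies in showing that $B$ is \emph{linearly independent}, and this is the step I expect to be the main obstacle, since it is where the hypothesis $U\cap V=\operatorname{span}\{w_1,\ldots,w_k\}$ must be used in an essential way.

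I would handle the independence as follows. Suppose $\sum_i a_i w_i+\sum_j b_j u_j+\sum_\ell c_\ell v_\ell=0$. Rearranging gives $\sum_\ell c_\ell v_\ell=-\sum_i a_i w_i-\sum_j b_j u_j$; the right-hand side lies in $U$ while the left-hand side lies in $V$, so the common vector $\sum_\ell c_\ell v_\ell$ lies in $U\cap V$ and is therefore a linear combination of the $w_i$. But $\{w_1,\ldots,w_k,v_1,\ldots,v_n\}$ is a basis of $V$, hence linearly independent, which forces $c_\ell=0$ for all $\ell$. Feeding this back leaves $\sum_i a_i w_i+\sum_j b_j u_j=0$, and the linear independence of the basis $\{w_1,\ldots,w_k,u_1,\ldots,u_m\}$ of $U$ forces the remaining coefficients to vanish. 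Thus $B$ is a basis, and counting its elements yields $\dim(U+V)=k+m+n=(k+m)+(k+n)-k=\dim U+\dim V-\dim(U\cap V)$, as claimed. As an alternative that avoids the bookkeeping, one could instead apply the rank--nullity theorem to the addition map $U\oplus V\to U+V$ sending $(u,v)\mapsto u+v$, whose image is $U+V$ and whose kernel $\{(w,-w):w\in U\cap V\}$ is isomorphic to $U\cap V$; this is slicker but presupposes the direct-sum form of rank--nullity.
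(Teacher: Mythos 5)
Your proof is correct and follows essentially the same route as the paper's: extend a basis of $U\cap V$ to bases of $U$ and of $V$, show the union is a basis of $U+V$, and count. The paper merely asserts the basis claim ("It can be shown that..."), whereas you supply the linear-independence argument in full; that argument is sound.
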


\begin{proof}
Let $A$ be a basis for $U\cap V$ and let $B$ and $C$ be bases for $U$ and $V$, respectively, such that $A\subseteq B$ and $A\subseteq C$.
It can be shown that $A\cup B\cup C$ forms a basis for $U+V$, which implies that
\begin{equation*}
\dim(U+V)
=|A|+|B\setminus A|+|C\setminus A|
=|B|+|C|-|A|
=\dim U+\dim V-\dim(U\cap V).
\qedhere
\end{equation*}
\end{proof}

\begin{theorem}
\label{thm.almost injective and sum rank >M}
Consider $\Phi=\{\varphi_n\}_{n=1}^N\subseteq\mathbb{R}^M$ and the intensity measurement mapping $\mathcal{A}\colon\mathbb{R}^M/\{\pm1\}\rightarrow\mathbb{R}^N$ defined by $(\mathcal{A}(x))(n):=|\langle x,\varphi_n\rangle|^2$. 
Suppose each $\varphi_n$ is nonzero.
Then $\mathcal{A}$ is almost injective if and only if $\Phi$ spans $\mathbb{R}^M$ and $\operatorname{rank}\Phi_{S}+\operatorname{rank}\Phi_{S^\mathrm{c}}>M$
for each nonempty proper subset $S\subseteq\{1,\ldots,N\}$.
\end{theorem}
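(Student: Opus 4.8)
The plan is to invoke the stronger restatement of Theorem~\ref{thm.almost injective and Minkowski sum proper subspace} (which folds the spanning hypothesis into the characterization) and then convert its Minkowski-sum condition into the desired rank condition using the inclusion--exclusion principle of Lemma~\ref{lem.subspace inclusion exclusion}. Concretely, under the standing assumption that each $\varphi_n$ is nonzero, that restatement says $\mathcal{A}$ is almost injective if and only if $\Phi$ spans $\mathbb{R}^M$ and $\operatorname{span}(\Phi_{S})^\perp+\operatorname{span}(\Phi_{S^\mathrm{c}})^\perp$ is a proper subspace of $\mathbb{R}^M$ for every nonempty proper subset $S$. Since the spanning clause already matches the theorem I am proving, it suffices to show that, \emph{assuming $\Phi$ spans $\mathbb{R}^M$}, the Minkowski-sum-properness condition is equivalent to the rank condition $\operatorname{rank}\Phi_{S}+\operatorname{rank}\Phi_{S^\mathrm{c}}>M$, and to verify this equivalence separately for each nonempty proper subset $S$.

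Fix such an $S$ and abbreviate $U:=\operatorname{span}(\Phi_{S})^\perp$ and $V:=\operatorname{span}(\Phi_{S^\mathrm{c}})^\perp$. The dimensions of these orthogonal complements are immediate from the rank--nullity relationship: $\dim U=M-\operatorname{rank}\Phi_{S}$ and $\dim V=M-\operatorname{rank}\Phi_{S^\mathrm{c}}$. The key computation is the intersection $U\cap V$. Using the standard identity that the intersection of orthogonal complements is the orthogonal complement of the sum, together with $\Phi_{S}\cup\Phi_{S^\mathrm{c}}=\Phi$, I would write
\[
U\cap V=\operatorname{span}(\Phi_{S})^\perp\cap\operatorname{span}(\Phi_{S^\mathrm{c}})^\perp=\bigl(\operatorname{span}(\Phi_{S})+\operatorname{span}(\Phi_{S^\mathrm{c}})\bigr)^\perp=\operatorname{span}(\Phi)^\perp=\{0\},
\]
where the final equality is exactly where the spanning hypothesis is used. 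Feeding $\dim(U\cap V)=0$ into Lemma~\ref{lem.subspace inclusion exclusion} then yields $\dim(U+V)=\dim U+\dim V=2M-\operatorname{rank}\Phi_{S}-\operatorname{rank}\Phi_{S^\mathrm{c}}$.

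It remains to read off the equivalence: $U+V$ is a proper subspace of $\mathbb{R}^M$ precisely when $\dim(U+V)<M$, and substituting the formula above shows that this happens if and only if $2M-\operatorname{rank}\Phi_{S}-\operatorname{rank}\Phi_{S^\mathrm{c}}<M$, i.e., if and only if $\operatorname{rank}\Phi_{S}+\operatorname{rank}\Phi_{S^\mathrm{c}}>M$. Since this holds for every nonempty proper $S$, the Minkowski-sum condition and the rank condition are equivalent, completing the proof. I do not anticipate a genuine obstacle here; the only step requiring care is the intersection identity $U\cap V=\operatorname{span}(\Phi)^\perp$, since it is the sole place the spanning assumption enters and it is what forces $\dim(U\cap V)=0$ so that inclusion--exclusion collapses to a clean additive formula. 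Everything else is bookkeeping with dimensions.
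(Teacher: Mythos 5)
Your proposal is correct and follows essentially the same route as the paper: reduce to the spanning case via the strengthened restatement of Theorem~\ref{thm.almost injective and Minkowski sum proper subspace}, then use Lemma~\ref{lem.subspace inclusion exclusion} together with $\operatorname{span}(\Phi_{S})^\perp\cap\operatorname{span}(\Phi_{S^\mathrm{c}})^\perp=\{0\}$ to compute $\dim\bigl(\operatorname{span}(\Phi_{S})^\perp+\operatorname{span}(\Phi_{S^\mathrm{c}})^\perp\bigr)=2M-\operatorname{rank}\Phi_{S}-\operatorname{rank}\Phi_{S^\mathrm{c}}$ and read off the equivalence. Your explicit justification of the intersection identity via $\bigl(\operatorname{span}(\Phi_{S})+\operatorname{span}(\Phi_{S^\mathrm{c}})\bigr)^\perp=\operatorname{span}(\Phi)^\perp$ is exactly the step the paper uses implicitly.
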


\begin{proof}
Considering the discussion after the proof of Theorem~\ref{thm.almost injective and Minkowski sum proper subspace}, it suffices to assume that $\Phi$ spans $\mathbb{R}^M$.
Furthermore, considering Theorem~\ref{thm.almost injective and Minkowski sum proper subspace}, it suffices to characterize when $\dim\left(\operatorname{span}(\Phi_{S})^\perp+\operatorname{span}(\Phi_{S^\mathrm{c}})^\perp\right)<M$.
By Lemma \ref{lem.subspace inclusion exclusion}, we have
\begin{align*}
&\dim\left(\operatorname{span}(\Phi_{S})^\perp+\operatorname{span}(\Phi_{S^\mathrm{c}})^\perp\right)\\
&\qquad\qquad=\dim\left(\operatorname{span}(\Phi_{S})^\perp\right)
+\dim\left(\operatorname{span}(\Phi_{S^\mathrm{c}})^\perp\right)
-\dim\left(\operatorname{span}(\Phi_{S})^\perp\cap\operatorname{span}(\Phi_{S^\mathrm{c}})^\perp\right).
\end{align*}
Since $\Phi$ is assumed to span $\mathbb{R}^M$, we also have that
$\operatorname{span}(\Phi_{S})^\perp\cap\operatorname{span}(\Phi_{S^\mathrm{c}})^\perp=\{0\}$,
and so
\begin{align*}
\dim\left(\operatorname{span}(\Phi_{S})^\perp+\operatorname{span}(\Phi_{S^\mathrm{c}})^\perp\right)
&=\Big(M-\dim\left(\operatorname{span}(\Phi_{S})\right)\Big)
+\Big(M-\dim\left(\operatorname{span}(\Phi_{S^\mathrm{c}})\right)\Big)
-0\\
&=2M-\operatorname{rank}\Phi_{S}-\operatorname{rank}\Phi_{S^\mathrm{c}}.
\end{align*}
As such, $\dim\left(\operatorname{span}(\Phi_{S})^\perp+\operatorname{span}(\Phi_{S^\mathrm{c}})^\perp\right)<M$ precisely when $\operatorname{rank}\Phi_{S}+\operatorname{rank}\Phi_{S^\mathrm{c}}>M$.
\end{proof}

At this point, we point out some interesting consequences of Theorem~\ref{thm.almost injective and sum rank >M}.
First of all, $\Phi$ cannot be almost injective if $N<M+1$ since $\operatorname{rank}\Phi_{S}+\operatorname{rank}\Phi_{S^\mathrm{c}}\leq|S|+|S^\mathrm{c}|=N$.
Also, in the case where $N=M+1$, we note that $\Phi$ is almost injective precisely when $\Phi$ is \textit{full spark}, that is, every size-$M$ subcollection is a spanning set (note this implies that all of the $\varphi_n$'s are nonzero).
In fact, every full spark $\Phi$ with $N\geq M+1$ yields almost injective intensity measurements, which in turn implies that a generic $\Phi$ yields almost injectivity when $N\geq M+1$~\cite{BalanCE:06}.
This is in direct analogy with injectivity in the real case; here, injectivity requires $N\geq 2M-1$, injectivity with $N=2M-1$ is equivalent to being full spark, and being full spark suffices for injectivity whenever $N\geq 2M-1$~\cite{BalanCE:06}.
Another thing to check is that the condition for injectivity implies the condition for almost injectivity (it does).

Having established that full spark ensembles of size $N\geq M+1$ yield almost injective intensity measurements, we note that checking whether a matrix is full spark is $\NP$-hard in general~\cite{Khachiyan:95}.
Granted, there are a few explicit constructions of full spark ensembles which can be used~\cite{AlexeevCM:12,PuschelK:05}, but it would be nice to have a condition which is not computationally difficult to test in general.
We provide one such condition in the following theorem, but first, we briefly review the requisite frame theory.

A \textit{frame} is an ensemble $\Phi=\{\varphi_n\}_{n=1}^N\subseteq\mathbb{R}^M$ together with \textit{frame bounds} $0<A\leq B<\infty$ with the property that for every $x\in\mathbb{R}^M$,
\begin{equation*}
A\|x\|^2
\leq\sum_{n=1}^N|\langle x,\varphi_n\rangle|^2
\leq B\|x\|^2.
\end{equation*}
When $A=B$, the frame is said to be \textit{tight}, and such frames come with a painless reconstruction formula:
\begin{equation*}
x=\frac{1}{A}\sum_{n=1}^N\langle x,\varphi_n\rangle\varphi_n.
\end{equation*}
To be clear, the theory of frames originated in the context of infinite-dimensional Hilbert spaces~\cite{DaubechiesGM:86,DuffinS:52}, and frames have since been studied in finite-dimensional settings, primarily because this is the setting in which they are applied computationally.
Of particular interest are so-called \textit{unit norm tight frames (UNTFs)}, which are tight frames whose frame elements have unit norm: $\|\varphi_n\|=1$ for every $n=1,\ldots,N$.
Such frames are useful in applications; for example, if you encode a signal $x$ using frame coefficients $\langle x,\varphi_n\rangle$ and transmit these coefficients across a channel, then UNTFs are optimally robust to noise~\cite{GoyalVT:98} and one erasure~\cite{CasazzaK:03}.
Intuitively, this optimality comes from the fact that frame elements of a UNTF are particularly well-distributed in the unit sphere~\cite{BenedettoF:03}.
Another pleasant feature of UNTFs is that it is straightforward to test whether a given frame is a UNTF:
Letting $\Phi=[\varphi_1\cdots\varphi_N]$ denote an $M\times N$ matrix whose columns are the frame elements, then $\Phi$ is a UNTF precisely when each of the following occurs simultaneously:
\begin{itemize}
\item[(i)] the rows have equal norm
\item[(ii)] the rows are orthogonal
\item[(iii)] the columns have unit norm
\end{itemize}
(This is a direct consequence of the tight frame's reconstruction formula and the fact that a UNTF has unit-norm frame elements; furthermore, since the columns have unit norm, it is not difficult to see that the rows will necessarily have norm $\sqrt{N/M}$.)
In addition to being able to test that an ensemble is a UNTF, various UNTFs can be constructed using \textit{spectral tetris}~\cite{CasazzaFMWZ:11} (though such frames necessarily have $N\geq 2M$), and \textit{every} UNTF can be constructed using the recent theory of \textit{eigensteps}~\cite{CahillFMPS:13,FickusMPS:13}.
Now that UNTFs have been properly introduced, we relate them to almost injectivity for phase retrieval:

\begin{theorem}
\label{thm.almost injectivity and relatively prime UNTFs}
If $M$ and $N$ are relatively prime, then every unit norm tight frame $\Phi=\{\varphi_n\}_{n=1}^N\subseteq\mathbb{R}^M$ yields almost injective intensity measurements.
\end{theorem}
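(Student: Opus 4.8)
The plan is to reduce everything to Theorem~\ref{thm.almost injective and sum rank >M}, which characterizes almost injectivity (for ensembles of nonzero vectors) by two conditions: that $\Phi$ spans $\mathbb{R}^M$, and that $\operatorname{rank}\Phi_S+\operatorname{rank}\Phi_{S^\mathrm{c}}>M$ for every nonempty proper subset $S\subseteq\{1,\ldots,N\}$. Since a unit norm tight frame has $\|\varphi_n\|=1$, every $\varphi_n$ is nonzero, and any frame spans, so the entire theorem comes down to verifying the rank inequality. First I would argue by contradiction: suppose some nonempty proper $S$ has $r_1+r_2\leq M$, writing $r_1:=\operatorname{rank}\Phi_S$, $r_2:=\operatorname{rank}\Phi_{S^\mathrm{c}}$, $W_1:=\operatorname{span}(\Phi_S)$, and $W_2:=\operatorname{span}(\Phi_{S^\mathrm{c}})$.

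The key step is to exploit the tight frame identity restricted to an orthogonal complement. Comparing $\sum_n\|\varphi_n\|^2=N$ with the trace $AM$ of the frame operator shows the frame constant is $A=N/M$, so $\sum_{n=1}^N|\langle v,\varphi_n\rangle|^2=\frac{N}{M}\|v\|^2$ for all $v$. For $v\in W_1^\perp$ the terms indexed by $S$ vanish, leaving $\frac{N}{M}\|v\|^2=\sum_{n\in S^\mathrm{c}}|\langle v,\varphi_n\rangle|^2$; since $P_{W_1^\perp}v=v$ here, this exhibits $\{P_{W_1^\perp}\varphi_n\}_{n\in S^\mathrm{c}}$ as a tight frame for $W_1^\perp$ with constant $N/M$. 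Taking the trace of its frame operator gives exactly $\frac{N}{M}\dim W_1^\perp$, while $\sum_{n\in S^\mathrm{c}}\|P_{W_1^\perp}\varphi_n\|^2\leq\sum_{n\in S^\mathrm{c}}\|\varphi_n\|^2=|S^\mathrm{c}|$, so $\frac{N}{M}\dim W_1^\perp\leq|S^\mathrm{c}|$. The symmetric argument on $W_2^\perp$ yields $\frac{N}{M}\dim W_2^\perp\leq|S|$.

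The final step combines these bounds with the contradiction hypothesis and coprimality. Because $\Phi$ spans, $r_1+r_2\geq\dim(W_1+W_2)=M$, so with $r_1+r_2\leq M$ we get $r_1+r_2=M$, hence $\dim W_1^\perp=r_2$ and $\dim W_2^\perp=r_1$. The two inequalities become $\frac{N}{M}r_2\leq|S^\mathrm{c}|$ and $\frac{N}{M}r_1\leq|S|$; summing and using $r_1+r_2=M$ and $|S|+|S^\mathrm{c}|=N$ gives $N\leq N$, forcing both to be equalities. In particular $\frac{N}{M}r_1=|S|\in\mathbb{Z}$, so $M\mid Nr_1$, and since $\gcd(M,N)=1$ this forces $M\mid r_1$. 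But $S$ and $S^\mathrm{c}$ are nonempty with each $\varphi_n\neq0$, so $1\leq r_1\leq M-1$, contradicting $M\mid r_1$. This establishes the rank inequality, and Theorem~\ref{thm.almost injective and sum rank >M} then gives almost injectivity.

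The hard part will be the trace/Naimark-complement step in the middle paragraph: the crucial observation is that restricting the tight frame identity to $W_1^\perp$ produces a \emph{genuine} tight frame there, so the trace of its frame operator is \emph{exactly} $\frac{N}{M}\dim W_1^\perp$, whereas passing to projections can only shrink the norms $\|P_{W_1^\perp}\varphi_n\|\leq 1$. Once this is in hand, the collapse of the summed inequalities to an equality and the divisibility contradiction are entirely forced by the coprimality of $M$ and $N$.
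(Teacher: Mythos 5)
Your proof is correct, and it reaches the paper's conclusion by the same overall strategy: reduce to the rank criterion of Theorem~\ref{thm.almost injective and sum rank >M}, exploit the tight-frame identity $\Phi_S\Phi_S^*+\Phi_{S^\mathrm{c}}\Phi_{S^\mathrm{c}}^*=\tfrac{N}{M}I$, force the equality $\tfrac{N}{M}\operatorname{rank}\Phi_S=|S|$ when the rank inequality fails, and contradict coprimality. The difference is in the middle step. The paper observes that the two positive semidefinite summands are simultaneously diagonalizable, reduces to diagonal matrices $D_1+D_2=\tfrac{N}{M}I$, and analyzes the supports $L_1,L_2$ of their diagonals: disjointness of the supports forces every nonzero entry of $D_1$ to equal $N/M$, whence $|S|=\operatorname{Tr}[D_1]=K(N/M)$. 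You instead restrict the frame identity to $W_1^\perp=\operatorname{span}(\Phi_S)^\perp$, note that $\{P_{W_1^\perp}\varphi_n\}_{n\in S^\mathrm{c}}$ is a tight frame there with the same constant, and compare the exact trace $\tfrac{N}{M}\dim W_1^\perp$ of its frame operator with the upper bound $|S^\mathrm{c}|$ coming from $\|P_{W_1^\perp}\varphi_n\|\leq 1$; summing this with its mirror image collapses both inequalities to equalities. Your version avoids any appeal to simultaneous diagonalization and is phrased entirely in terms of subspaces, projections, and traces, which some readers may find more elementary; the paper's version makes the spectral picture explicit (the eigenvalue $N/M$ appearing on $W_1^\perp$) and isolates exactly where strictness of $|L_1|+|L_2|\geq M$ comes from. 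Both hinge on the same arithmetic punchline, that $\tfrac{N}{M}r_1=|S|$ with $0<r_1<M$ is incompatible with $\gcd(M,N)=1$.
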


\begin{proof}
Pick a nonempty proper subset $S\subseteq\{1,\ldots,N\}$.
By Theorem~\ref{thm.almost injective and sum rank >M}, it suffices to show that
$\operatorname{rank}\Phi_{S}+\operatorname{rank}\Phi_{S^\mathrm{c}}>M$,
or equivalently,
$\operatorname{rank}\Phi_{S}\Phi_{S}^*+\operatorname{rank}\Phi_{S^\mathrm{c}}\Phi_{S^{\mathrm{c}}}^*>M$.
Note that since $\Phi$ is a unit norm tight frame, we also have
\begin{equation*}
\Phi_{S}\Phi_{S}^*+\Phi_{S^\mathrm{c}}\Phi_{S^{\mathrm{c}}}^*=\Phi\Phi^*=\tfrac{N}{M}I,
\end{equation*}
and so $\Phi_{S}\Phi_{S}^*$ and $\Phi_{S^\mathrm{c}}\Phi_{S^{\mathrm{c}}}^*$ are simultaneously diagonalizable, i.e., there exists a unitary matrix $U$ and diagonal matrices $D_1$ and $D_2$ such that
\begin{align*}
UD_1U^*+UD_2U^*=\Phi_{S}\Phi_{S}^*+\Phi_{S^\mathrm{c}}\Phi_{S^{\mathrm{c}}}^*=\tfrac{N}{M}I.
\end{align*}
Conjugating by $U^*$, this then implies that $D_1+D_2=\tfrac{N}{M}I$.
Let $L_1\subseteq\{1,\ldots,M\}$ denote the diagonal locations of the nonzero entries in $D_1$, and $L_2\subseteq\{1,\ldots,M\}$ similarly for $D_2$.
To complete the proof, we need to show that $|L_1|+|L_2|>M$ (since $|L_1|+|L_2|=\operatorname{rank}\Phi_{S}\Phi_{S}^*+\operatorname{rank}\Phi_{S^\mathrm{c}}\Phi_{S^{\mathrm{c}}}^*$).
Note that $L_1\cup L_2\neq\{1,\ldots,M\}$ would imply that $D_1+D_2$ has at least one zero in its diagonal, contradicting the fact that $D_1+D_2$ is a nonzero multiple of the identity; as such, $L_1\cup L_2=\{1,\ldots,M\}$ and $|L_1|+|L_2|\geq M$.
We claim that this inequality is strict due to the assumption that $M$ and $N$ are relatively prime.
To see this, it suffices to show that $L_1\cap L_2$ is nonempty.
Suppose to the contrary that $L_1$ and $L_2$ are disjoint.
Then since $D_1+D_2=\tfrac{N}{M}I$, every nonzero entry in $D_1$ must be $N/M$.
Since $S$ is a nonempty proper subset of $\{1,\ldots,N\}$, this means that there exists $K\in(0,M)$ such that $D_1$ has $K$ entries which are $N/M$ and $M-K$ which are $0$.
Thus,
\begin{equation*}
|S|
=\operatorname{Tr}[\Phi_{S}^*\Phi_{S}]
=\operatorname{Tr}[\Phi_{S}\Phi_{S}^*]
=\operatorname{Tr}[UD_1U^*]
=\operatorname{Tr}[D_1]
=K(N/M),
\end{equation*}
implying that $N/M=|S|/K$ with $K\neq M$ and $|S|\neq N$.
Since this contradicts the assumption that $N/M$ is in lowest form, we have the desired result.
\end{proof}

In general, whether a UNTF $\Phi$ yields almost injective intensity measurements is determined by whether it is \textit{orthogonally partitionable}: $\Phi$ is orthogonally partitionable if there exists a partition $S\sqcup S^\mathrm{c}=\{1,\ldots,N\}$ such that $\operatorname{span}(\Phi_S)$ is orthogonal to $\operatorname{span}(\Phi_{S^\mathrm{c}})$.
Specifically, a UNTF yields almost injective intensity measurements precisely when it is not orthogonally partitionable.
Historically, this property of UNTFs has been pivotal to the understanding of singularities in the algebraic variety of UNTFs~\cite{DykemaS:06}, and it has also played a key role in solutions to the Paulsen problem~\cite{BodmannC:10,CasazzaFM:12}.
However, it is not clear in general how to efficiently test for this property; this is why Theorem~\ref{thm.almost injectivity and relatively prime UNTFs} focuses on such a special case.

\begin{figure}[t]
\begin{center}
\includegraphics[width=0.4\textwidth]{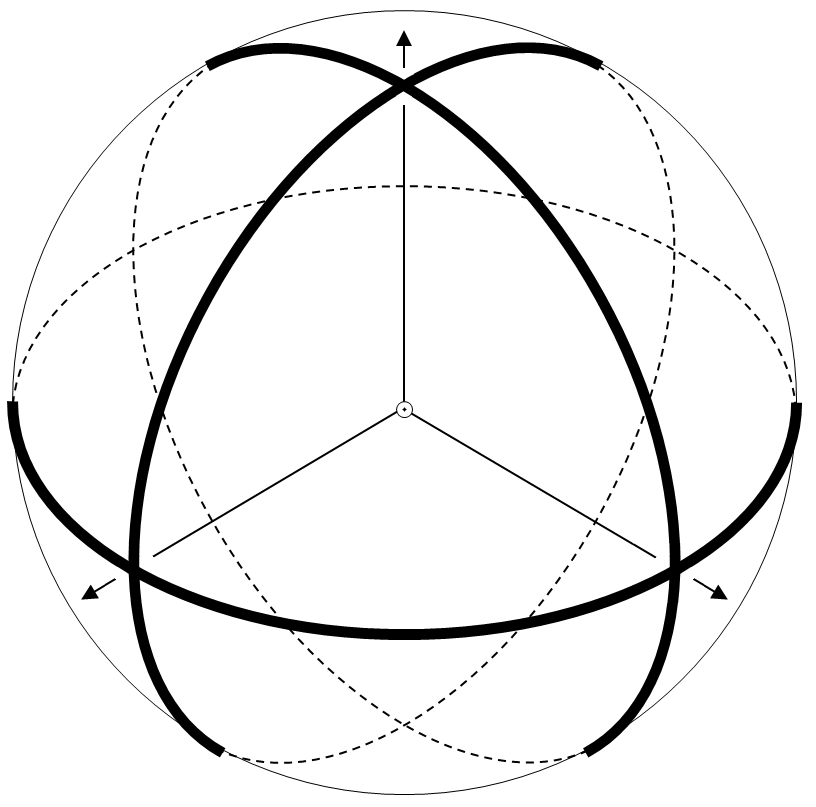}
\caption{
\label{fig.simplex}
The simplex in $\mathbb{R}^3$.
Pointing out of the page is the vector $\smash{\frac{1}{\sqrt{3}}(1,1,1)}$, while the other vectors are the three permutations of $\smash{\frac{1}{\sqrt{3}}(1,-1,-1)}$.
Together, these four vectors form a unit norm tight frame, and since $M=3$ and $N=4$ are relatively prime, these yield almost injective intensity measurements in accordance with Theorem~\ref{thm.almost injectivity and relatively prime UNTFs}.
For this ensemble, the points $x$ such that $\mathcal{A}^{-1}(\mathcal{A}(x))\neq\{\pm x\}$ are contained in the three coordinate planes.
Above, we depict the intersection between these planes and the unit sphere.
According to Theorem~\ref{thm.full spark Mx(M+1) solution is NP-complete}, performing phase retrieval with simplices such as this is $\NP$-hard.
}
\end{center}
\end{figure}

\section{The computational complexity of phase retrieval}

The previous section characterized the real ensembles which yield almost injective intensity measurements.
The benefit of seeking almost injectivity instead of injectivity is that we can get away with much smaller ensembles.
For example, a full spark ensemble in $\mathbb{R}^M$ of size $M+1$ suffices for almost injectivity, while $2M-1$ measurements are required for injectivity.
In this section, we demonstrate that this savings in the number of measurements can come at a substantial price in computational requirements for phase retrieval.
In particular, we consider the following problem:

\begin{problem}
\label{prob.full spark Mx(M+1) solution}
Let $\mathcal{F}=\{\Phi_M\}_{M=2}^\infty$ be a family of ensembles $\Phi_M=\{\varphi_{M;n}\}_{n=1}^{N(M)}\subseteq\mathbb{R}^M$, where $N(M)=\poly(M)$.
Then $\textsc{ConsistentIntensities}[\mathcal{F}]$ is the following problem: 
Given $M\geq 2$ and a rational sequence $\{b_n\}_{n=1}^{N(M)}$, does there exist $x\in\mathbb{R}^M$ such that $|\langle x,\varphi_{M;n}\rangle|=b_n$ for every $n=1,\ldots,N(M)$?
\end{problem}

In this section, we will evaluate the computational complexity of $\textsc{ConsistentIntensities}[\mathcal{F}]$ for a large class of families of small ensembles $\mathcal{F}$, but first, we briefly review the main concepts involved.
Complexity theory is chiefly concerned with \textit{complexity classes}, which are sets of problems that share certain computational requirements, such as time or space.
For example, the complexity class $\P$ is the set of problems which can be solved in an amount of time that is bounded by some polynomial of the bit-length of the input.
As another example, $\NP$ contains all problems for which an affirmative answer comes with a certificate that can be verified in polynomial time; note that $\P\subseteq\NP$ since for every problem $A\in\P$, one may ignore the certificate and find the affirmative answer in polynomial time.
One key tool that is used to evaluate the complexity of a problem is called \textit{polynomial-time reduction}.
This is a polynomial-time algorithm that solves a problem $A$ by exploiting an oracle which solves another problem $B$, indicating that solving $A$ is no harder than solving $B$ (up to polynomial factors in time); if such a reduction exists, we write $A\leq B$.
For example, any efficient phase retrieval procedure for $\mathcal{F}$ can be used as a subroutine to solve $\textsc{ConsistentIntensities}[\mathcal{F}]$, indicating that phase retrieval for $\mathcal{F}$ is at least as hard as $\textsc{ConsistentIntensities}[\mathcal{F}]$.
A problem $B$ is called \textit{$\NP$-hard} if $B\geq A$ for every problem $A\in\NP$.
Note that since $\leq$ is transitive, it suffices to show that $B\geq C$ for some $\NP$-hard problem $C$.
Finally, a problem $B$ is called \textit{$\NP$-complete} if $B\in\NP$ is $\NP$-hard; intuitively, $\NP$-complete problems are the hardest of problems in $\NP$.
It is an open problem whether $\P=\NP$, but inequality is widely believed~\cite{Cook:13}; note that under this assumption, $\NP$-hard problems have no computationally efficient solution.
This provides a proper context for the main result of this section:

\begin{theorem}
\label{thm.full spark Mx(M+1) solution is NP-complete}
Let $\mathcal{F}=\{\Phi_M\}_{M=2}^\infty$ be a family of full spark ensembles $\Phi_M=\{\varphi_{M;n}\}_{n=1}^{M+1}\subseteq\mathbb{R}^M$ with rational entries that can be computed in polynomial time.
Then $\textsc{ConsistentIntensities}[\mathcal{F}]$ is $\NP$-complete.
\end{theorem}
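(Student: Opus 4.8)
The plan is to prove the two halves of $\NP$-completeness separately: first that $\textsc{ConsistentIntensities}[\mathcal{F}]\in\NP$, and then that it is $\NP$-hard by a polynomial-time reduction from \textsc{SubsetSum}. For membership, I would take the witness to be a solution $x$ itself. Any solution obeys $\langle x,\varphi_{M;n}\rangle=\sigma_n b_n$ for some sign pattern $\sigma\in\{\pm1\}^M$; since full spark makes $\{\varphi_{M;n}\}_{n=1}^M$ a basis of $\mathbb{R}^M$, this $x$ is the unique solution of a rational linear system and hence, by Cramer's rule, has bit-length polynomial in the input. So whenever the answer is affirmative a witness $x$ of polynomial size exists, and it is checked by computing the $M+1$ inner products and comparing their moduli to the $b_n$, all in polynomial time.

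The hardness argument rests on one purely linear-algebraic observation. Because $\{\varphi_{M;n}\}_{n=1}^M$ is a basis, the map $x\mapsto(y_1,\ldots,y_M)$ with $y_n:=\langle x,\varphi_{M;n}\rangle$ is a linear isomorphism of $\mathbb{R}^M$, and the last coordinate is a fixed rational functional of the others: writing $\varphi_{M;M+1}=\sum_{n=1}^M d_n\varphi_{M;n}$ yields $y_{M+1}=\sum_{n=1}^M d_n y_n$ for every $x$. Full spark forces $d_n\neq0$ for all $n$, for if $d_n=0$ then $\varphi_{M;M+1}$ would lie in the span of the remaining $M-1$ vectors $\{\varphi_{M;m}:m\le M,\,m\neq n\}$, so those together with $\varphi_{M;M+1}$ would be $M$ linearly dependent vectors, contradicting full spark. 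The coefficients $d_n$ are rational and computable in polynomial time from $\Phi_M$.

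Given this, the reduction essentially writes itself. Starting from a \textsc{SubsetSum} instance of positive integers $a_1,\ldots,a_M$ with target $t$ — the count of integers fixes the dimension $M$ and hence which ensemble $\Phi_M$ of the family to compute — I would set $b_n:=a_n/|d_n|$ for $n\le M$ and $b_{M+1}:=|2t-s|$, where $s:=\sum_n a_n$. A consistent $x$ exists exactly when some $\sigma\in\{\pm1\}^M$ satisfies $|\sum_{n=1}^M d_n\sigma_n b_n|=b_{M+1}$; substituting the $b_n$ and setting $\epsilon_n:=\mathrm{sign}(d_n)\,\sigma_n$ (which ranges over all of $\{\pm1\}^M$ as $\sigma$ does) turns this into the existence of $\epsilon\in\{\pm1\}^M$ with $|\sum_n\epsilon_n a_n|=|2t-s|$. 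Writing $\epsilon_n=2\delta_n-1$ gives $\sum_n\epsilon_n a_n=2\sum_n\delta_n a_n-s$, and because negating $\epsilon$ flips the sign the absolute values cause no loss; hence the condition holds iff $\sum_n\delta_n a_n=t$ for some $\delta\in\{0,1\}^M$, i.e., iff the \textsc{SubsetSum} instance is affirmative.

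The main obstacle — and the reason the theorem quantifies over an \emph{arbitrary} full spark family rather than one of our own design — is that the measurement vectors are handed to us, so the reduction must exploit structure shared by all full spark ensembles. Full spark supplies exactly what is needed, twice over: the invertible $M\times M$ block underlying both the $\NP$ witness and the isomorphism $x\mapsto(y_n)$, and the nonvanishing of every $d_n$, which guarantees that each integer $a_n$ is genuinely represented in the encoded equation (if some $d_n$ vanished, the variable $\delta_n$ would drop out and the reduction would collapse). The rest is routine bookkeeping: verifying that the $b_n$ are nonnegative rationals of polynomial bit-length and that computing $\Phi_M$, the coefficients $d_n$, and the $b_n$ are all polynomial-time, which holds because $\Phi_M$ has polynomial-time-computable rational entries.
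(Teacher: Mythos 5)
Your proposal is correct and follows essentially the same route as the paper: membership in $\NP$ via the rationality and polynomial bit-length of any solution $x$ (the paper inverts $\Phi_M\Phi_M^*$ where you invoke Cramer's rule on the first $M$ equations — the same idea), and $\NP$-hardness by the identical reduction from \textsc{SubsetSum}, with your coefficients $d_n$ playing exactly the role of the paper's vector $w$ in $\Psi^{-1}\Phi_M=[I~w]$ and the same choice of $b_n$ and $b_{M+1}$. The full-spark-implies-$d_n\neq0$ observation and the $\pm1$-to-$\{0,1\}$ bookkeeping also match the paper's argument.
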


Note that since the ensembles $\Phi_M$ are full spark, the existence of a solution to the phase retrieval problem $|\langle x,\varphi_{M;n}\rangle|=b_n$ for every $n=1,\ldots,M+1$ implies uniqueness by Theorem~\ref{thm.almost injective and sum rank >M}.
Before proving this theorem, we first relate it to a previous hardness result from~\cite{SahinoglouC:91}.
Specifically, this result can be restated using the terminology in this paper as follows:
There exists a family $\mathcal{F}=\{\Phi_M\}_{M=2}^\infty$ of ensembles $\Phi_M=\{\varphi_{M;n}\}_{n=1}^{2M}\subseteq\mathbb{C}^M$, each of which yielding almost injective intensity measurements, such that $\textsc{ConsistentIntensities}[\mathcal{F}]$ is $\NP$-complete.
Interestingly, these are the smallest possible almost injective ensembles in the complex case, and we suspect that the result can be strengthened to the obvious analogy of Theorem~\ref{thm.full spark Mx(M+1) solution is NP-complete}:

\begin{conjecture}
Let $\mathcal{F}=\{\Phi_M\}_{M=2}^\infty$ be a family of ensembles $\Phi_M=\{\varphi_{M;n}\}_{n=1}^{2M}\subseteq\mathbb{C}^M$ which yield almost injective intensity measurements and have complex rational entries that can be computed in polynomial time.
Then $\textsc{ConsistentIntensities}[\mathcal{F}]$ is $\NP$-complete.
\end{conjecture}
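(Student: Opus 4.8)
The plan is to mirror the strategy behind Theorem~\ref{thm.full spark Mx(M+1) solution is NP-complete} and the specific-family result of~\cite{SahinoglouC:91}: establish membership in $\NP$ and then prove $\NP$-hardness by reduction from \textsc{Partition} (equivalently \textsc{SubsetSum}). The new difficulty, relative to the real case, is that a complex intensity measurement pins down only the modulus of $\langle x,\varphi_{M;n}\rangle$ and leaves a \emph{continuous} phase free, whereas the real reduction exploited the \emph{discrete} sign ambiguity $\langle x,\varphi_n\rangle=\pm b_n$.

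First I would recast consistency in terms of the dependency space. Writing $\Phi=[\varphi_{M;1}\cdots\varphi_{M;2M}]$ as an $M\times 2M$ matrix, almost injectivity forces $\Phi$ to span $\mathbb{C}^M$, so $V:=\mathrm{range}(\Phi^*)$ has complex dimension $M$ and $V^\perp=\ker\Phi$ has complex dimension $M$. Fix a rational basis of $\ker\Phi$ as the rows of an $M\times 2M$ matrix $D$; since the entries of $\Phi$ are rational and poly-time computable, so is $D$. A rational datum $\{b_n\}$ is consistent if and only if there is a unimodular vector $u\in\mathbb{C}^{2M}$ (with $|u_n|=1$) such that
\begin{equation*}
D\,\mathrm{diag}(b)\,u=0,
\end{equation*}
because then $y:=\mathrm{diag}(b)u\in V$ and any $x$ solving the rational linear system $\Phi^*x=y$ solves the instance. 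Thus the problem becomes: \emph{does the complex subspace $\ker(D\,\mathrm{diag}(b))$ meet the unit torus $\mathbb{T}^{2M}$?} This reformulation also yields the $\NP$ certificate, namely the phase vector $u$ (or $x$): once the phases are fixed, recovering $x$ is a single polynomial-time linear solve and the moduli are checked directly. The subtlety is that this certificate is no longer the discrete sign pattern $\epsilon\in\{\pm1\}^{M+1}$ of the real case, so one must argue that a feasible instance admits a solution of polynomially bounded bit-length; I would approach this by choosing indices $J$ with $\pi_J|_V$ an isomorphism, reducing to $|J|=M$ balanced modulus equations in $M$ phases, and bounding the complexity of an isolated solution uniformly over all admissible families.

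For $\NP$-hardness I would reduce from \textsc{Partition}. Given weights $w_1,\dots,w_k$, I would select a support $T\subseteq\{1,\dots,2M\}$ on which the restricted dependency structure is as rigid as possible (using the spanning hypothesis to guarantee the relevant dependency coefficients are nonzero), set $b_n=0$ off $T$ (forcing $y_n=0$ and effectively deleting $\varphi_{M;n}$), and engineer the nonzero moduli $\{b_n\}_{n\in T}$ so that $D\,\mathrm{diag}(b)u=0$ decomposes into $k$ gadget constraints, each forcing a single phase into a two-element set $\{e^{i\phi_0},e^{-i\phi_0}\}$ (a Boolean variable recording which side of the partition the index lies on), together with one aggregate relation whose real part reads $\sum_j\pm w_j=0$. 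Consistency would then hold precisely when the weights split evenly, and the reduction is polynomial since all $b_n$ are rational and computable from the fixed family.

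The main obstacle is that, unlike~\cite{SahinoglouC:91}, we do \emph{not} get to design the ensemble: we control only the nonnegative moduli $\{b_n\}$, while $D$ — and hence the geometry of every gadget — is dictated by the arbitrary admissible family. Building robust Boolean-and-sum gadgets that survive for \emph{every} such $D$, and simultaneously securing the uniform bit-length bound needed for $\NP$-membership, is exactly the gap that keeps this statement a conjecture. I expect the right first step is to settle the full-spark admissible families, where each $(M+1)$-element support carries an essentially unique dependency relation with all coefficients nonzero, reproducing the clean subset-sum core of the real proof; only then would I attempt the general spanning case.
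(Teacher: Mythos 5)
You cannot be compared against the paper's proof here, because the paper does not prove this statement: it is posed explicitly as a conjecture (the complex analogue of Theorem~\ref{thm.full spark Mx(M+1) solution is NP-complete}), with only the weaker specific-family result of~\cite{SahinoglouC:91} cited as evidence. Accordingly, what you have written is a research program, not a proof, and to your credit you say so yourself: the two load-bearing steps --- the uniform gadget construction and the bit-length bound for the certificate --- are precisely the parts you leave open, and they are the whole content of the conjecture. Your torus reformulation is sound (modulo a conjugation in choosing $D$, since $y\in\operatorname{range}(\Phi^*)$ means $\langle y,d\rangle=0$ for $d\in\ker\Phi$, so the rows of $D$ should be conjugated kernel vectors): consistency is indeed equivalent to $\ker(D\operatorname{diag}(b))$ meeting the torus $\mathbb{T}^{2M}$ on the support of $b$. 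But the two gaps are real and concrete. For $\NP$-membership, the paper's real-case argument hinges on the fact that $v:=\Phi_M^*x$ is a \emph{signed} copy of the rational data $b$, forcing $x=(\Phi_M\Phi_M^*)^{-1}\Phi_Mv$ to be rational; once $\{\pm1\}$ is replaced by the circle, a feasible instance may admit only solutions whose phases are algebraic of uncontrolled degree and height, and without a uniform bound (over all admissible families $\mathcal{F}$) even membership in $\NP$ is unproved. Your suggestion to restrict to $M$ coordinates where $\pi_J|_V$ is an isomorphism does not by itself yield such a bound.

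For hardness, the asymmetry with both known results is worse than your sketch suggests. In Theorem~\ref{thm.full spark Mx(M+1) solution is NP-complete}, $N=M+1$ and full spark make the dependency space \emph{one-dimensional} with all coefficients nonzero ($\Psi^{-1}\Phi_M=[\,I~w\,]$), so consistency collapses to the single equation $|\sum_m\varepsilon_ma_m|=|2z-\sum_ma_m|$ --- the entire reduction lives in that one relation. With $N=2M$ the kernel of $\Phi_M$ is $M$-dimensional, each row of $D\operatorname{diag}(b)u=0$ is an exact complex-linear equation in the phases (not a modulus constraint), and the rows couple the $u_n$ in a pattern dictated entirely by the family; almost injectivity in the complex case is not even characterized (as Section~3 of the paper notes), so you have no control over which restricted minors of $D$ vanish, and hence no guarantee that your two-point-phase gadgets decouple, for an \emph{arbitrary} admissible $\mathcal{F}$. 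By contrast, \cite{SahinoglouC:91} gets to design one convenient ensemble. Your closing proposal --- first settle families where every $(M+1)$-element support carries an essentially unique all-nonzero dependency --- is a reasonable first attack, but as it stands the argument proves nothing beyond what the paper already knows, and it would be misleading to present it as more than a plan.
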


To prove Theorem~\ref{thm.full spark Mx(M+1) solution is NP-complete}, we devise a polynomial-time reduction from the following problem which is well-known to be $\NP$-complete~\cite{Karp:72}:

\begin{problem}[\textsc{SubsetSum}]
\label{prob.subset sum}
Given a finite collection of integers $A$ and an integer $z$, does there exist a subset $S\subseteq A$ such that $\sum_{a\in S}a=z$?
\end{problem}

\begin{proof}[Proof of Theorem~\ref{thm.full spark Mx(M+1) solution is NP-complete}]
We first show that $\textsc{ConsistentIntensities}[\mathcal{F}]$ is in $\NP$.
Note that if there exists an $x\in\mathbb{R}^M$ such that $|\langle x,\varphi_{M;n}\rangle|=b_n$ for every $n=1,\ldots,M+1$, then $x$ will have all rational entries.
Indeed, $v:=\Phi_M^*x$ has all rational entries, being a signed version of $\{b_n\}_{n=1}^{M+1}$, and so $x=(\Phi_M\Phi_M^*)^{-1}\Phi_Mv$ is also rational.
Thus, we can view $x$ as a certificate of finite bit-length, and for each $n=1,\ldots,M+1$, we know that $|\langle x,\varphi_{M;n}\rangle|=b_n$ can be verified in time which is polynomial in this bit-length, as desired.

Now we show that $\textsc{ConsistentIntensities}[\mathcal{F}]$ is $\NP$-hard by reduction from \textsc{SubsetSum}.
To this end, take a finite collection of integers $A$ and an integer $z$.
Set $M:=|A|$ and label the members of $A$ as $\{a_m\}_{m=1}^M$.
Let $\Psi$ denote the $M\times M$ matrix whose columns are the first $M$ members of $\Phi_M$.
Since $\Phi_M$ is full spark, $\Psi$ is invertible and $\Psi^{-1}\Phi_M$ has the form $[I~w]$, where $w$ has all nonzero entries; indeed, if the $m$th entry of $w$ were zero, then $\Phi_M\setminus\{\varphi_{M;m}\}$ would not span, violating full spark.
Now define 
\begin{equation}
\label{eq.defn of b}
b_n:=
\left\{
\begin{array}{ll}
\displaystyle{\bigg|\frac{a_n}{w_n}\bigg|}&\mbox{if }n=1,\ldots,M\\
\displaystyle{\bigg|2z-\sum_{m=1}^Ma_m\bigg|}&\mbox{if }n=M+1.
\end{array}
\right.
\end{equation}
We claim that an oracle for $\textsc{ConsistentIntensities}[\mathcal{F}]$ would return ``yes'' from the inputs $M$ and $\{b_n\}_{n=1}^{M+1}$ defined above if and only if there exists a subset $S\subseteq A$ such that $\sum_{a\in S}a=z$, which would complete the reduction.

To prove our claim, we start with ($\Rightarrow$):
Suppose there exists $x\in\mathbb{R}^M$ such that $|\langle x,\varphi_{M;n}\rangle|=b_n$ for every $n=1,\ldots,M+1$.
Then $y:=\Psi^*x$ satisfies $|\langle y,\Psi^{-1}\varphi_{M;n}\rangle|=b_n$ for every $n=1,\ldots,M+1$.
Since $\Psi^{-1}\Phi_M=[I~w]$, then by \eqref{eq.defn of b}, the entries of $y$ satisfy 
\begin{equation*}
|y_m|=\left|\frac{a_m}{w_m}\right|\quad\forall m=1,\ldots,M,
\qquad\qquad
\bigg|\sum_{m=1}^My_mw_m\bigg|=\bigg|2z-\sum_{m=1}^Ma_m\bigg|.
\end{equation*}
By the first equation above, there exists a sequence $\{\varepsilon_m\}_{m=1}^M$ of $\pm1$'s such that $y_m=\varepsilon_ma_m/w_m$ for every $m=1,\ldots,M$, and so the second equation above gives
\begin{equation*}
\bigg|2z-\sum_{m=1}^Ma_m\bigg|
=\bigg|\sum_{m=1}^My_mw_m\bigg|
=\bigg|\sum_{m=1}^M\varepsilon_ma_m\bigg|
=\bigg|\sum_{\substack{m=1\\\varepsilon_m=1}}^Ma_m-\sum_{\substack{m=1\\\varepsilon_m=-1}}^Ma_m\bigg|
=\bigg|2\sum_{\substack{m=1\\\varepsilon_m=1}}^Ma_m-\sum_{m=1}^Ma_m\bigg|.
\end{equation*}
Removing the absolute values, this means the left-hand side above is equal to the right-hand side, up to a sign factor.
At this point, isolating $z$ reveals that $z=\sum_{m\in S}a_m$, where $S$ is either $\{m:\varepsilon_m=1\}$ or $\{m:\varepsilon_m=-1\}$, depending on the sign factor.

For ($\Leftarrow$), suppose there is a subset $S\subseteq\{1,\ldots,M\}$ such that $z=\sum_{m\in S}a_m$.
Define $\varepsilon_m:=1$ when $m\in S$ and $\varepsilon_m:=-1$ when $m\not\in S$.
Then
\begin{equation*}
\bigg|\sum_{m=1}^M\varepsilon_ma_m\bigg|
=\bigg|\sum_{\substack{m=1\\\varepsilon_m=1}}^Ma_m-\sum_{\substack{m=1\\\varepsilon_m=-1}}^Ma_m\bigg|
=\bigg|2\sum_{\substack{m=1\\\varepsilon_m=1}}^Ma_m-\sum_{m=1}^Ma_m\bigg|
=\bigg|2z-\sum_{m=1}^Ma_m\bigg|.
\end{equation*}
By the analysis from the ($\Rightarrow$) direction, taking $y_m:=\varepsilon_ma_m/w_m$ for each $m=1,\ldots,M$ then ensures that $|\langle y,\Psi^{-1}\varphi_{M;n}\rangle|=b_n$ for every $n=1,\ldots,M+1$, which in turn ensures that $x:=(\Psi^*)^{-1}y$ satisfies $|\langle x,\varphi_{M;n}\rangle|=b_n$ for every $n=1,\ldots,M+1$.
\end{proof}

Based on Theorem~\ref{thm.full spark Mx(M+1) solution is NP-complete}, there is no polynomial-time algorithm to perform phase retrieval for minimal almost injective ensembles, assuming $\P\neq\NP$.
On the other hand, there exist ensembles of size $2M-1$ for which phase retrieval is particularly efficient.
For example, letting $\delta_{M;m}\in\mathbb{R}^M$ denote the $m$th identity basis element, consider the ensemble $\Phi_M:=\{\delta_{M;m}\}_{m=1}^M\cup\{\delta_{M;1}+\delta_{M;m}\}_{m=2}^M$; then one can reconstruct (up to global phase) any $x$ whose first entry is nonzero by first taking $\hat{x}[1]:=|\langle x,\delta_{M;1}\rangle|$, and then taking
\begin{equation*}
\hat{x}[m]:=
\frac{1}{2\hat{x}[1]}\Big(|\langle x,\delta_{M;1}+\delta_{M;m}\rangle|^2-|\langle x,\delta_{M;1}\rangle|^2-|\langle x,\delta_{M;m}\rangle|^2\Big)
\qquad
\forall m=2,\ldots,M.
\end{equation*}
Intuitively, we expect a redundancy threshold that determines whether phase retrieval can be efficient, and this suggests the following open problem:
What is the smallest $C$ for which there exists a family of ensembles of size $N=CM+o(M)$ such that phase retrieval can be performed in polynomial time?

\section*{Acknowledgments}
The authors thank the Norbert Wiener Center for Harmonic Analysis and Applications at the University of Maryland, College Park for hosting a workshop on phase retrieval that helped solidify the main ideas in the almost injectivity portion of this paper. 
This work was supported by NSF DMS 1042701 and 1321779.
The views expressed in this article are those of the authors and do not reflect the official policy or position of the United States Air Force, Department of Defense, or the U.S.~Government.


\begin{thebibliography}{WW}

\bibitem{AlexeevBFM:12}
B.\ Alexeev, A.\ S.\ Bandeira, M.\ Fickus, D.\ G.\ Mixon,
Phase retrieval with polarization,
Available online: arXiv:1210.7752

\bibitem{AlexeevCM:12}
B.\ Alexeev, J.\ Cahill, D.\ G.\ Mixon,
Full spark frames,
J.\ Fourier Anal.\ Appl.\ 18 (2012) 1167--1194.

\bibitem{BalanCE:06}
R.\ Balan, P.\ Casazza, D.\ Edidin,
On signal reconstruction without phase,
Appl.\ Comput.\ Harmon.\ Anal.\ 20 (2006) 345--356.

\bibitem{BandeiraCMN:13}
A.\ S.\ Bandeira, J.\ Cahill, D.\ G.\ Mixon, A.\ A.\ Nelson,
Saving phase: Injectivity and stability for phase retrieval,
Available online: arXiv:1302.4618

\bibitem{BandeiraCM:13}
A.\ S.\ Bandeira, Y.\ Chen, D.\ G.\ Mixon,
Phase retrieval from power spectra of masked signals,
Available online: arXiv:1303.4458

\bibitem{BenedettoF:03}
J.\ J.\ Benedetto, M.\ Fickus,
Finite normalized tight frames,
Adv.\ Comput.\ Math.\ 18 (2003) 357--385.

\bibitem{BodmannC:10}
B.\ G.\ Bodmann, P.\ G.\ Casazza, 
The road to equal-norm Parseval frames, 
J.\ Funct.\ Anal.\ 258 (2010) 397--420.

\bibitem{BodmannH:13}
B.\ G.\ Bodmann, N.\ Hammen,
Stable phase retrieval with low-redundancy frames,
Available online: arXiv:1302.5487

\bibitem{BunkEtal:07}
O.\ Bunk, A.\ Diaz, F.\ Pfeiffer, C.\ David, B.\ Schmitt, D.\ K.\ Satapathy, J.\ F.\ van der Veen, 
Diffractive imaging for periodic samples: retrieving one-dimensional concentration profiles across microfluidic channels,
Acta Cryst.\ A63 (2007) 306--314.

\bibitem{CahillFMPS:13}
J.\ Cahill, M.\ Fickus, D.\ G.\ Mixon, M.\ J.\ Poteet, N.\ Strawn,
Constructing finite frames of a given spectrum and set of lengths,
Appl.\ Comput.\ Harmon.\ Anal.\ 35 (2013) 52--73.

\bibitem{CandesESV:13}
E.\ J.\ Cand\`{e}s, Y.\ C.\ Eldar, T.\ Strohmer, V.\ Voroninski,
Phase retrieval via matrix completion,
SIAM J.\ Imaging Sci.\ 6 (2013) 199--225.

\bibitem{CandesL:12}
E.\ J.\ Cand\`{e}s, X.\ Li,
Solving quadratic equations via PhaseLift when there are about as many equations as unknowns,
Available online: arXiv:1208.6247

\bibitem{CandesSV:13}
E.\ J.\ Cand\`{e}s, T.\ Strohmer, V.\ Voroninski,
PhaseLift: Exact and stable signal recovery from magnitude measurements via convex programming,
Commun.\ Pure Appl.\ Math.\ 66 (2013) 1241--1274.

\bibitem{CasazzaFM:12}
P.\ G.\ Casazza, M.\ Fickus, D.\ G.\ Mixon,
Auto-tuning unit norm frames,
Appl.\ Comput.\ Harmon.\ Anal.\ 32 (2012) 1--15.

\bibitem{CasazzaFMWZ:11}
P.\ G.\ Casazza, M.\ Fickus, D.\ G.\ Mixon, Y.\ Wang, Z.\ Zhou,
Constructing tight fusion frames,
Appl.\ Comput.\ Harmon.\ Anal.\ 30 (2011) 175--187.

\bibitem{CasazzaK:03}
P.\ G.\ Casazza, J.\ Kova\v{c}evi\'{c}, 
Equal-norm tight frames with erasures, 
Adv.\ Comput.\ Math. 18 (2003) 387--430.

\bibitem{ChaiMP:11}
A.\ Chai, M.\ Moscoso, G.\ Papanicolaou,
Array imaging using intensity-only measurements,
Inverse Probl.\ 27 (2011) 015005.

\bibitem{Cook:13}
S.\ Cook, 
The P versus NP problem, 
Available online: \url{http://www.claymath.org/millennium/P vs NP/pvsnp.pdf}

\bibitem{DaintyF:87}
J.\ C.\ Dainty, J.\ R.\ Fienup, 
Phase retrieval and image reconstruction for astronomy,
In: H. Stark, ed., Image Recovery: Theory and Application, Academic Press, New York, 1987.

\bibitem{DaubechiesGM:86}
I.\ Daubechies, A.\ Grossmann, Y.\ Meyer, 
Painless nonorthogonal expansions, 
J.\ Math.\ Phys.\ 27 (1986) 1271--1283.

\bibitem{DemanetH:12}
L.\ Demanet, P.\ Hand,
Stable optimizationless recovery from phaseless linear measurements,
Available online: arXiv:1208.1803

\bibitem{DuffinS:52}
R.\ J.\ Duffin, A.\ C.\ Schaeffer,
A class of nonharmonic Fourier series,
Trans.\ Amer.\ Math.\ Soc.\ 72 (1952) 341--366.

\bibitem{DykemaS:06}
K.\ Dykema, N.\ Strawn, 
Manifold structure of spaces of spherical tight frames, 
Int.\ J.\ Pure Appl.\ Math.\ 28 (2006) 217--256.

\bibitem{FickusMPS:13}
M.\ Fickus, D.\ G.\ Mixon, M.\ J.\ Poteet, N.\ Strawn,
Constructing all self-adjoint matrices with prescribed spectrum and diagonal,
Available online: arXiv:1107.2173

\bibitem{FlammiaSC:05}
S.\ T.\ Flammia, A.\ Silberfarb, C.\ M.\ Caves,
Minimal informationally complete measurements for pure states,
Found.\ Phys.\ 35 (2005) 1985--2006.

\bibitem{GoyalVT:98}
V.\ K.\ Goyal, M.\ Vetterli, N.\ T.\ Thao, 
Quantized overcomplete expansions in $\mathbb{R}^N$: Analysis, synthesis, and algorithms, 
IEEE Trans.\ Inform.\ Theory 44 (1998) 1--31.

\bibitem{Harrison:93}
R.\ W.\ Harrison, 
Phase problem in crystallography, 
J.\ Opt.\ Soc.\ Am.\ A 10 (1993) 1046--1055.

\bibitem{HeinosaariMW:13}
T.\ Heinosaari, L.\ Mazzarella, M.\ M.\ Wolf, 
Quantum tomography under prior information, 
Commun.\ Math.\ Phys.\ 318 (2013) 355--374.

\bibitem{Karp:72}
R.\ M.\ Karp,
Reducibility Among Combinatorial Problems,
In: R.\ E.\ Miller, J.\ W.\ Thatcher (Eds.), Complexity of Computer Computations (1972) 85--103.

\bibitem{Khachiyan:95}
L.\ Khachiyan,
On the complexity of approximating extremal determinants in matrices,
J.\ Complexity 11 (1995) 138--153.

\bibitem{MiaoISE:08}
J.\ Miao, T.\ Ishikawa, Q.\ Shen, T.\ Earnest,
Extending X-ray crystallography to allow the imaging of noncrystalline materials, cells, and single protein complexes, 
Annu.\ Rev.\ Phys.\ Chem.\ 59 (2008) 387--410.

\bibitem{Millane:90}
R.\ P.\ Millane, 
Phase retrieval in crystallography and optics, 
J.\ Opt.\ Soc.\ Am.\ A 7 (1990) 394--411.

\bibitem{PuschelK:05}
M.\ P\"{u}schel, J.\ Kova\v{c}evi\'{c}, 
Real, tight frames with maximal robustness to erasures,
Proc.\ Data Compr.\ Conf.\ (2005) 63--72.

\bibitem{SahinoglouC:91}
H.\ Sahinoglou, S.\ D.\ Cabrera,
On phase retrieval of finite-length sequences using the initial time sample,
IEEE Trans.\ Circuits Syst.\ 38 (1991) 954--958.

\bibitem{Voroninski:12b}
V.\ Voroninski,
A comparison between the PhaseLift and PhaseCut algorithms,
Available online: \url{http://math.berkeley.edu/~vladv/PhaseCutProofs.pdf}

\bibitem{Voroninski:12a}
V.\ Voroninski,
Phase retrieval from quadratic unitary measurements and implications for Wright's conjecture,
Available online: \url{http://math.berkeley.edu/~vladv/UnitaryCase.pdf}

\bibitem{WaldspurgerAM:12}
I.\ Waldspurger, A.\ d'Aspremont, S.\ Mallat,
Phase recovery, MaxCut and complex semidefinite programming,
Available online: arXiv:1206.0102

\bibitem{Walther:63}
A.\ Walther, 
The question of phase retrieval in optics, 
Opt.\ Acta 10 (1963) 41--49.

\end{thebibliography}
\end{document}